\documentclass[a4paper,english,thm-restate,colorlinks=true,linkcolor=blue,citecolor=blue,urlcolor=blue,nolineno]{gd-lipics-v3}
\usepackage{amsfonts}
\usepackage{amssymb}
\usepackage{amsthm}
\usepackage{graphicx} 
\usepackage{url}
\usepackage{tcolorbox} 
\usepackage{hyperref} 
\usepackage{todonotes}
\hideLIPIcs

\newtheorem{problem}[theorem]{Problem}

\newcommand{\Vg}{V_\mathrm{g}}
\newcommand{\Vo}{V_\mathrm{o}}
\newcommand{\Vp}{V_\mathrm{p}}

\ccsdesc[100]{Theory of computation $\rightarrow$ Randomness, geometry and discrete structures $\rightarrow$ Computational geometry}

\title{Two Results on Outer-String Graphs}
\keywords{String graph, Outer-string graph, Constrained outer-string graph, Outer-1-string graph, Computational complexity, NP-hardness}

\author{Todor Anti\'{c}}{Faculty of Mathematics and Physics, Charles University, Prague, Czech Republic}{todor@kam.mff.cuni.cz}{https://orcid.org/0009-0008-6521-7987}{Research supported by Grant Agency of Charles University (GAUK) grant No. 302226 and  Czech Science Foundation grant No. GA\v CR 23-04949X.}
\author{V\'{\i}t Jel\'{\i}nek}{Faculty of Mathematics and Physics, Charles University, Prague, Czech Republic}{jelinek@iuuk.mff.cuni.cz}{https://orcid.org/0000-0003-4831-4079}{Research supported by Czech Science Foundation grant No. GA\v CR 23-04949X.}
\author{Jan Kratochv\'{\i}l}{Faculty of Mathematics and Physics, Charles University, Prague, Czech Republic}{honza@kam.mff.cuni.cz}{https://orcid.org/0000-0002-2620-6133}{Research supported by Czech Science Foundation grant No. GA\v CR 23-04949X.}
\author{Peter Stumpf}{Department of Applied Mathematics, Faculty of Mathematics and Physics, Charles University, Czech Republic\\
Department of Theoretical Computer Science, Faculty of Information Technology, Czech Technical University in Prague, Czech Republic}{stumpf@kam.mff.cuni.cz}{https://orcid.org/0000-0003-0531-9769}{Research supported by Czech Science Foundation grant No. GA\v CR 23-04949X.}

\authorrunning{T.~Anti\'{c}, V.~Jel\'{\i}nek, J.~Kratochv\'{\i}l, P.~Stumpf} 

\Copyright{Todor~Anti\'{c}, V\'{\i}t~Jel\'{\i}nek,Jan~Kratochv\'{\i}l and Peter Stumpf} 

\category{Track 1, Long Paper}

\acknowledgements{{This research was initiated at the DiGeo Workshop 2025.}}

\begin{document}

\maketitle

\begin{abstract}
An \emph{outer-string representation} of a graph $G$ is an intersection representation of $G$ where vertices are represented by curves (strings) inside the unit disk, and each curve has exactly one endpoint on the boundary of the unit disk (the anchor of the curve). Additionally, if any two curves are allowed to cross at most once, we call this an \emph{outer-$1$-string representation} of~$G$. If we impose a cyclic ordering on the vertices of~$G$ and require the cyclic order of the anchors to respect this cyclic order, such a representation is called a \emph{constrained outer-string representation}. 
In this paper, we present two results about graphs admitting outer-string representations. 

Firstly, we show that for a bipartite graph $G$ (and, more generally, for any $\{C_3,C_5\}$-free graph $G$) with a given cyclic order of vertices, we can decide in polynomial time whether $G$ admits a constrained outer-string representation. Our algorithm follows from a characterization by a single forbidden configuration, similar to that of Biedl et al. [GD 2024] for chordal graphs. Secondly, we answer an open question from the same authors and show that determining whether a given graph admits an outer-1-string representation is NP-hard. More generally, we show that it is NP-hard to determine if a given graph $G$ admits an outer-$k$-string representation for any fixed $k\ge1$.
\end{abstract}

\section{Introduction}\label{sec:intro}

Geometric intersection graphs are intensively studied both for their applied motivation and for interesting structural and algorithmic properties. If $\cal M$ is a universe of (typically arc-connected) sets in the Euclidean plane defined by their geometrical properties, an $\cal M$ \emph{representation} of a graph is an assignment of sets from $\cal M$ to its vertices such that any two vertices are adjacent if and only if the sets assigned to them are non-disjoint (i.e., they intersect). A graph is an $\cal M$-\emph{intersection graph} (or briefly an $\cal M$ \emph{graph}) if it allows an $\cal M$ representation. Classical and well studied examples are interval graphs (intersection graphs of intervals on a line), circular arc graphs (intersection graphs of arcs of a circle), disk graphs (intersection graphs of disks in the plane), and many others. Cf. the monographs~\cite{golumbic:perfect,B-L-Spinrad,McKee}. \emph{String graphs} (intersection graphs of curves in the plane) form the most general of intersection-defined classes of graphs (when arc-connected sets in the plane are considered)~\cite{ehrlichET76,sinden66}. Their recognition is NP-complete; cf.~\cite{Kratochvil91a} for NP-hardness and~\cite{jcss-SchaeferSS03} for NP-membership.      

In this paper, we are concerned with \emph{outer-string graphs}, a subclass of string graphs. An \emph{outer-string representation} is a string representation such that all the curves of the representation reach the outerface. Equivalently, all curves representing the vertices of the intersection graph are required to lie within a predefined region (a disk or a half-plane) and be attached to the boundary of the region (the bounding circle or the boundary line) by one end-point, which is referred to as the \emph{anchor} of the curve. The outer-string graphs were named as such in~\cite{kratochvil:82}, but  were already implicitly defined  in~\cite{sinden66}. In that seminal paper, Sinden provided a motivation stemming from thin film RC-circuits, which were predecessors of VLSI designs. He studied in detail the \emph{constrained} version of the outer-string recognition problem, namely, the variant in which the anchors of the curves must respect a prescribed cyclic order on the boundary of the region that hosts the representation. He identified an infinite sequence of critical graphs that do not admit a constrained outer-string representation, specifically the complements of cycles of lengths greater than 3 with the natural cyclic order of anchors. Up to now, no other critical graphs are known, and it is possible that a graph has a constrained outer-string representation if and only if it does not contain the complement of such a cycle as an induced subgraph. In~\cite{biedlCKR:gd24,biedlCKR:DAM26}, the authors show that this is the case for chordal graphs. Moreover, since the complement of a cycle of length greater than 4 contains $C_4$ or $C_5$ as an induced subgraph, a chordal graph (with a prescribed cyclic order of the vertices) admits a constrained outer-string representation if and only if it does not contain two alternating independent edges, which is the complement of $C_4$ with the natural cyclic order of its vertices.

It is important to note that in an outer-string representation (and more generally in a string representation), the number of crossing points of a pair of strings is not controlled. Kratochv\'{\i}l and Matou\v{s}ek~\cite{KratochvilM91exp} showed that there are string graphs that require an exponential number of crossing points in any string representation. Motivated by this example, Biedl, Biniaz, and Derka~\cite{BiedlBD18} showed that there are outer-string graphs that require an exponential number of crossing points in any outer-string representation. Their example can be easily modified to be bipartite and immediately implies that an exponential number of crossings is needed in constrained outer-string representations. This has led to the definition of $k$-string (outer-$k$-string) graphs that admit a string (an outer-string, respectively) representation such that any two curves share at most $k$ crossing points (a touching point is counted twice). Outer-1-string graphs, in particular, were introduced and studied in~\cite{BiedlD17}. Although 1-string graphs are NP-complete to recognize~\cite{Kratochvil91a}, the computational complexity of recognizing outer-1-string graphs is asked as an open problem in~\cite{biedlCKR:gd24,biedlCKR:DAM26}. To complete the picture, let us mention that it is shown in~\cite{OuterstringChiBounded} that the NP-hardness of recognizing outer-string graphs follows from the proof of the NP-hardness of recognizing the so called cylinder graphs presented in~\cite{MiddendorfP93cylinder}. However, that reduction does not control the number of crossing points.

\medskip\noindent
{\bf Our results} We first consider the constrained outer-string graphs. In Theorem~\ref{thm:C3C5-free}, we show that a $\{C_3,C_5\}$-free graph admits a constrained outer-string representation if and only if it does not contain an alternating pair of independent edges. In particular, the existence of a constrained outer-string representation can be decided in polynomial time for such graphs, even though the representation may require exponentially many crossings. This is a counterpart to the above mentioned characterization for chordal graphs~\cite{biedlCKR:gd24,biedlCKR:DAM26}. It is somewhat interesting that for both chordal and $\{C_3,C_5\}$-free graphs, the critical constrained non-outer-string instance is unique and the same. An immediate corollary of our result is that the same holds for bipartite graphs.

Our second main result concerns outer-string graphs with a bounded number of crossing points per pair of strings. In Theorem~\ref{thm:nphardness}, we show that recognizing outer-1-string graphs is NP-complete, thus answering a question explicitly asked in~\cite{biedlCKR:gd24,biedlCKR:DAM26}.

\section{Preliminaries and Notation}\label{sec:prelim}

We consider simple undirected graphs (i.e., no loops or multiple edges), with edges being two-element subsets of the set of vertices. For a graph $G$, its vertex set (edge set) will be denoted by $V(G)$ ($E(G)$, respectively). For the sake of brevity, we use the notation $uv$ (instead of $\{u,v\}$) for the edge joining vertices $u$ and $v$. 
For an outer-string graph $G$, we usually denote an outer-string representation of $G$ by $G^*$. We will assume that $G^*$ is a representation of $G$ by non-self-crossing curves grounded on the boundary of the unit disk (which we denote by $D$) and with all curves contained in the interior of $D$. Furthermore, for a  curve $\gamma$ in $G^*$, we assume that $\gamma$ intersects the boundary of $D$ in exactly one point, which we call the \emph{anchor} of $\gamma$ and denote by $\gamma^\bot$. We call the other endpoint of $\gamma$ the \emph{tip} of $\gamma$, and we denote it by $\gamma^\top$. For a vertex $v\in V(G)$, we denote by $v^*$ the curve representing $v$ in $G^*$ and denote the anchor and tip of $v^*$ by $v^\bot$ and $v^\top$, respectively. 

We call a graph $G$ together with a cyclic order $\pi$ of its vertices an \emph{ordered graph}.
An ordered graph $(G,\pi)$ is a \emph{solvable constrained outer-string instance} if $G$ has an outer-string representation such that the cyclic ordering of the anchors coincides with $\pi$.
We say that an ordered graph $F$ is an \emph{outer-string obstruction} if it does not admit a constrained outer-string representation, while every induced ordered subgraph of $F$ admits such a representation. Clearly, if $F$ is an outer-string obstruction and an ordered  graph $G$ contains $F$ as an induced subgraph, then $G$ does not admit a 
constrained outer-string representation.

\begin{figure}[h]
    \centering
    \includegraphics[scale = 0.8]{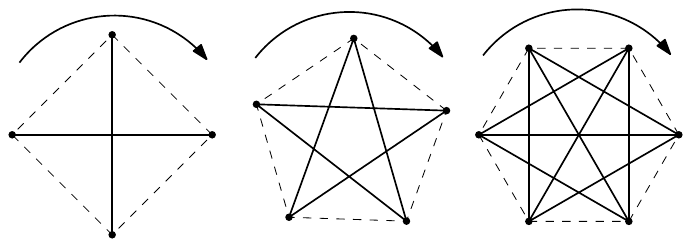}
    \caption{The co-holes $-C_n$ for $n\in\{4,5,6\}$ with the natural order of vertices. }
    \label{fig:obstruction}
\end{figure}

The complement $-C_n$ of the cycle $C_n$ of length $n$ ordered cyclically in the natural order is called a \emph{co-hole} (of length $n$). See Figure \ref{fig:obstruction}. 
It was observed by Sinden in~\cite{sinden66} that for each $n\ge4$, the co-hole of length $n$ is an outer-string obstruction. Since then, no other outer-string obstructions have been discovered. A co-hole of length $4$ will be called an \emph{alternating-edge obstruction}.

Next, we prove a simple technical lemma that we will frequently use in our proofs to show that certain configurations cannot appear in  solvable constrained outer-string instances.
Two vertex-disjoint subgraphs $G_1, G_2$ of a graph $G$ are called \emph{independent} if $G$ has no edge incident with vertices from both $G_1$ and $G_2$.

\begin{lemma}\label{lem:crossingpaths}
    Let $G$ be a graph, and let $P=p_1,p_2,\dots,p_k$ and $Q=q_1,q_2,\dots,q_s$ be two independent vertex-disjoint paths in $G$. If $G$ is ordered so that the end-vertices of the paths alternate, then $G$ is not a solvable constrained outer-string instance. 
\end{lemma}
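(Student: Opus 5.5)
Suppose, for contradiction, that $G^*$ is a constrained outer-string representation of $G$ in which the anchors of $p_1,p_k,q_1,q_s$ alternate around $\partial D$, say in the cyclic order $p_1^\bot, q_1^\bot, p_k^\bot, q_s^\bot$. The plan is to build, from the strings of $P$, a single curve $\alpha$ inside $D$ that joins $p_1^\bot$ to $p_k^\bot$. Likewise, the strings of $Q$ give a curve $\beta$ joining $q_1^\bot$ to $q_s^\bot$. Then we compare a topological fact (the curves must meet) with the independence hypothesis (they cannot).

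First I construct $\alpha$. For each $i<k$, the edge $p_ip_{i+1}$ is present, so $p_i^*\cap p_{i+1}^*\neq\emptyset$. The union $U_P=\bigcup_i p_i^*$ is therefore arc-connected, since consecutive strings meet. It contains the anchors $p_1^\bot$ and $p_k^\bot$. Walk along $p_1^*$ from its anchor to a point of $p_1^*\cap p_2^*$, then along $p_2^*$ to a point of $p_2^*\cap p_3^*$, and so on, finishing along $p_k^*$ to its anchor. This gives a path $\alpha\subseteq U_P$ from $p_1^\bot$ to $p_k^\bot$. Every point of $\alpha$ other than its two endpoints lies in the open disk, because each string meets $\partial D$ only at its anchor. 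Some care is needed with $p_1^*$ and $p_k^*$ themselves: a segment of $p_1^*$ might pass through $p_k^\bot$, but only if $p_k^\bot\in p_1^*$, which is impossible because the anchor lies in the interior of $D$ for every other string. If the concatenation is not simple, a simple subarc can be extracted from it. The curve $\beta\subseteq U_Q=\bigcup_j q_j^*$ is built in the same way.

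Now apply the Jordan curve theorem, which is the main (though standard) step. The arc $\alpha$ is a chord of the closed disk: it is a simple arc through the interior with endpoints on the boundary. It splits $D$ into two regions, and the points $q_1^\bot$ and $q_s^\bot$ lie on different boundary arcs between $p_1^\bot$ and $p_k^\bot$. Hence $\beta$, which runs from $q_1^\bot$ to $q_s^\bot$ inside $D$, must cross $\alpha$. One way to make this rigorous is to close $\alpha$ with one boundary arc into a Jordan curve; $\beta$ then starts inside this curve and ends outside it, and it cannot leave through the boundary arc since it touches $\partial D$ only at its endpoints. The endpoints of $\beta$ differ from those of $\alpha$, so any common point $x$ of $\alpha$ and $\beta$ lies in $p_i^*\cap q_j^*$ for some $i$ and $j$.

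Such a point means $p_iq_j\in E(G)$, contradicting the independence of $P$ and $Q$. Note that vertex-disjointness is what guarantees the four anchors are distinct, so the alternation makes sense. The only delicate point in the whole argument is the reduction to simple arcs and the application of Jordan; everything else follows directly from the definitions.
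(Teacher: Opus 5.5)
Your argument is correct, but it differs from the paper's, which gives no proof of its own: it cites Lemma~2 of Biedl et al.\ (in a solvable instance, the vertices of two independent connected subgraphs cannot alternate) and notes that two independent paths with alternating end-vertices are a special case.

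You instead prove the statement directly. You concatenate subarcs of consecutive strings into a chord $\alpha$ of $D$ from $p_1^\bot$ to $p_k^\bot$ (and likewise $\beta$), pass to simple subarcs, and use the Jordan curve theorem to force $\alpha\cap\beta\neq\emptyset$, so that some $p_i^*$ meets some $q_j^*$. The citation buys brevity and a formally more general statement. Your route buys self-containment with no real loss of generality, since two independent connected subgraphs with alternating vertices contain independent paths between those vertices. It also makes explicit that only the positions of the four end-anchors matter. That is exactly how the lemma is used in Section~\ref{sec:recog-outer-1-string}, where it is applied to an arbitrary outer-string representation with the anchor order read off the drawing.

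Two small precision points:
\begin{itemize}
\item When you close $\alpha$ with a boundary arc $A$, one endpoint of $\beta$ lies on $A$, i.e.\ on the Jordan curve itself. So ``starts inside'' should be read as ``the points of $\beta$ just after this endpoint lie in the bounded component''. This holds because near an interior point of $A$ the open disk lies on the bounded side. The other endpoint of $\beta$, on the complementary open arc, is in the unbounded component.
\item Vertex-disjointness is really needed in the last step, to ensure $p_i\neq q_j$, so that $p_i^*\cap q_j^*\neq\emptyset$ yields an edge between $P$ and $Q$. Distinctness of $p_1$ and $p_k$ instead comes from $k\ge 2$, which is implicit in the alternation hypothesis.
\end{itemize}
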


\begin{proof}
Lemma~2 of~\cite{biedlCKR:DAM26} says that in a solvable instance, the vertices of no two independent connected subgraphs may alternate. Our situation is a special case of two independent paths with alternating end-vertices.
\end{proof}

\section{The \texorpdfstring{$\{C_3,C_5\}$}{\{C\_3,C\_5\}}-Free Case}\label{sec:C_3-C_5-free}

A graph is $\{C_3, C_5\}$-free if it does not contain an induced subgraph isomorphic to a cycle of length 3 or 5.
This is the main result of this section:
\begin{theorem}\label{thm:C3C5-free}
If $G$ is an ordered $\{C_3,C_5\}$-free graph\footnote{Note that this means that the underlying unordered graph is $\{C_3,C_5\}$-free.}, then a
constrained outer-string instance for $G$ is solvable if and only if it does not contain the alternating-edge obstruction.
\end{theorem}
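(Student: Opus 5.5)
The ``only if'' direction is immediate: the alternating-edge obstruction is the co-hole of length $4$, which Sinden showed is not solvable, and solvability is inherited by induced ordered subgraphs. (Equivalently, apply Lemma~\ref{lem:crossingpaths} to two independent edges, viewed as paths of length one.) For the ``if'' direction we argue by induction on $|V(G)|$, using a minimal counterexample. Fix an ordered $\{C_3,C_5\}$-free graph $(G,\pi)$ with no alternating pair of independent edges. We may assume $G$ is connected. Otherwise the components occupy pairwise non-crossing sets of positions on the circle: two components whose vertices alternate would contain independent paths with alternating end-vertices, and a shortest such configuration yields either an alternating independent edge pair or a short odd cycle. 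Each component is then represented within its own sector.

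The inductive step removes a vertex $v$ that is consecutive in $\pi$ and reinserts its string. I would pick $v$ and its cyclic neighbour $u$ in $\pi$, and try to take $v$ to be a vertex whose neighbourhood is ``nested'' inside that of $u$, or otherwise easy to route. The key structural fact I expect to use is that $N(v)$ is an independent set, since $G$ is triangle-free. Given a representation of $G-v$ by induction, I would draw $v^*$ starting just next to $u^\bot$ and following $u^*$ closely, so that it crosses exactly the strings that $u^*$ crosses. Then I detach $v^*$ to avoid the strings in $N(u)\setminus N(v)$, and extend it to reach the strings in $N(v)\setminus N(u)$. A cleaner alternative is to strengthen the induction hypothesis to a normal form, for instance: every vertex has a curve whose tip region is reachable from the boundary arc between its anchor and the next one without crossing certain strings. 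Another option is to build the representation directly by a sweep. Each vertex $x$ gets a curve that goes radially inwards, and the edge $xy$ is realized by a detour of $x^*$ along the boundary sector to meet $y^*$, crossing only strings whose anchors lie between. Odd-cycle-freeness must then ensure that the crossed strings can be ``jumped over'' without creating unwanted intersections, using the freedom of exponentially many crossings.

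The main obstacle is controlling unwanted intersections when a string must reach a neighbour whose anchor is separated from its own by non-neighbours. The condition ``no alternating independent edges'' tells us that for an edge $xy$ and an edge $ab$ with $a,b$ on opposite sides of the chord $xy$, one of $xa,xb,ya,yb$ is an edge. I would try to show, using the absence of $C_3$ and $C_5$, that these forced adjacencies are consistent in a bipartite-like way. The goal is a nested laminar family of chords, ordered for instance by length in $\pi$, that can be realized greedily from innermost to outermost. Each new edge is realized by routing one endpoint's string through a thin tube around the already-drawn picture, entering only the regions of its neighbours. Proving that such a tube exists, i.e. that the set of strings it must avoid is separable from those it must hit, is where $C_5$-freeness should enter. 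A violating configuration would produce a 5-cycle, formed by the two chord endpoints, a common neighbour, and the blocking edge, or else an alternating independent edge pair. This case analysis is the step I expect to be hardest.
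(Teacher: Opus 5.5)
Your ``only if'' direction is fine. So is the reduction to connected $G$, though it is not needed; in fact no odd cycle arises there, since two independent paths with alternating ends always contain an alternating pair of independent edges.

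\textbf{The gap.} The entire ``if'' direction is missing. You list several strategies (nested neighbourhoods, an unspecified normal form, a radial sweep, a laminar family of chords) but carry none of them out. The one concrete mechanism is to keep the representation of $G-v$ supplied by induction and add a single new string $v^*$. This fails in general, because a neighbour of $v$ can be sealed off by non-neighbours of $v$.

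For example, take the path $v\,g\,o\,p$ with cyclic order $v,o,g,p$; it is bipartite and has no alternating independent edges. In a valid representation of $G-v$, the strings $o^*$ and $p^*$ may cross so that, together with the boundary arc from $o^\bot$ through $g^\bot$ to $p^\bot$, they enclose all of $g^*$, which only touches $o^*$ from inside. Then every curve from $v^\bot$ that meets $g^*$ must meet $o^*$ or $p^*$, and neither is adjacent to $v$. The induction hypothesis gives you no control over the representation of $G-v$. You also give neither a rule for choosing $v$ that provably avoids this nor a normal form that excludes it. So strings already present must be re-routed.

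The laminar-chord plan also cannot work if the chords are the edges. Edges of admissible instances may cross as chords: take vertices $1,2,3,4$ in this cyclic order with edges $13$, $12$, $24$.

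\textbf{The missing idea.} The paper moves the neighbours' strings rather than routing the new one.
\begin{enumerate}
\item Remove an arbitrary vertex $t$. Call $N(t)$ green, the vertices at distance two orange, and the rest purple. $C_3$-freeness makes the green set independent, and $C_5$-freeness makes the orange set independent; this is where $C_5$ enters.
\item Keep the orange and purple strings fixed. Extend each green string along a trajectory into the face of the orange--purple arrangement that contains $t^\bot$. Any green strings it meets are pushed ahead of its tip, so green strings stay disjoint.
\item A trajectory never crosses a purple string. It crosses an orange string $o^*$ only when leaving a pocket formed by $o^*$ and an adjacent purple $p^*$. This is legal for every green string carried along. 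A green $\bar g$ anchored inside the pocket must be adjacent to $o$, since otherwise $\bar g t$ and $op$ are alternating independent edges. A green string anchored outside has already legally crossed $o^*$ to get in.
\item The core of the proof is that trajectories exist. Otherwise the boundary of the reachable region gives a path in the orange--purple arrangement between two anchors that encloses a green anchor. The paper shows that such a barrier always contains an orange edge whose inner side lies in an orange--purple pocket, i.e.\ an exit. The ingredients are:
\begin{itemize}
\item the alternating-edge condition applied to $tg$ and two consecutive purple chunks;
\item a chunk-reducing rerouting along purple strings;
\item the independence of the orange set;
\item a parity argument in the remaining purple--orange--purple case.
\end{itemize}
\item Finally $t^*$ is drawn inside the face of $t^\bot$, where it meets exactly the green strings.
\end{enumerate}
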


The rest of this section is devoted to the proof of Theorem \ref{thm:C3C5-free}. 

Clearly, an instance that contains the alternating-edge obstruction is not solvable. Assume, therefore, that we are given an ordered $\{C_3, C_5\}$-free graph $G$ and a constrained outer-string instance with an underlying graph $G$ and no alternating-edge obstruction.

We proceed by induction on $n=|V|$, $n=1$ being 
trivial. 

Suppose $n>1$. We will construct an outer-string representation inside the unit disk $D$, with anchors on the boundary of $D$. Suppose that the anchors are given and there is no alternating edge obstruction. Choose a vertex $t\in V$ (``turquoise''). Call the neighbors of $t$ \emph{green vertices}, the vertices in $G-t$ with at least one green neighbor \emph{orange}, and the remaining vertices of $G$~\emph{purple}. Let $\Vg$, $\Vo$, and $\Vp$ denote the sets of green, orange, and purple vertices, respectively. See Figure \ref{fig:verticesofG}.

\begin{figure}[ht]
    \centering
    \includegraphics[page=1]{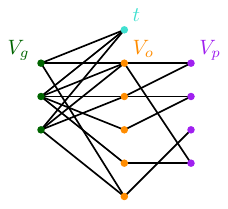}
    \caption{Coloring of the vertices of $G$ in the proof of Theorem \ref{thm:C3C5-free}.}
    \label{fig:verticesofG}
\end{figure}

Fix a representation $G^*_-$ of $G-t$ in $D$, which exists by induction. 

\begin{lemma}\label{lem-ind}
Both $\Vg$ and $\Vo$ are independent sets.
\end{lemma}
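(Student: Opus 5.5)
The plan is to argue directly from the absence of short odd induced cycles; no geometry or ordering is needed. Both claims follow by exhibiting an induced $C_3$ or $C_5$ whenever the claim fails.

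\emph{Green vertices.} I will show $\Vg$ is independent first. Suppose two green vertices $g_1,g_2$ are adjacent. Both are neighbors of $t$ by definition, so $t,g_1,g_2$ span a triangle. A triangle on three vertices is automatically induced, since all three possible edges are present. This contradicts $G$ being $C_3$-free.

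\emph{Orange vertices.} Next I show $\Vo$ is independent. Suppose $o_1,o_2\in\Vo$ are adjacent, and pick green neighbors $g_1$ of $o_1$ and $g_2$ of $o_2$. I split into two cases.
\begin{itemize}
\item If $o_1$ and $o_2$ have a common green neighbor $g$, then $g,o_1,o_2$ form a triangle, which is a contradiction.
\item Otherwise we may choose $g_1\neq g_2$ with $g_1o_2\notin E$ and $g_2o_1\notin E$. Consider the closed walk $t,g_1,o_1,o_2,g_2,t$. Its five vertices are distinct: orange vertices lie in $G-t$ and are not neighbors of $t$, so they differ from $t$ and from the green vertices.
\end{itemize}

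In the second case I check that this $5$-cycle has no chords. The pairs $tg_1$ and $tg_2$ are cycle edges, so the possible chords are $to_1$, $to_2$, $g_1g_2$, $g_1o_2$ and $g_2o_1$.
\begin{itemize}
\item $to_1$ and $to_2$ are absent because orange vertices are not neighbors of $t$.
\item $g_1g_2$ is absent by the independence of $\Vg$ just proved.
\item $g_1o_2$ and $g_2o_1$ are absent by the case assumption.
\end{itemize}
Hence $t,g_1,o_1,o_2,g_2$ is an induced $C_5$, contradicting the hypothesis.

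The only step needing care is making this case split exhaustive. Any common green neighbor gives a triangle; if there is none, then every green neighbor of $o_1$ is non-adjacent to $o_2$ and vice versa, which is exactly the non-edge condition used above.
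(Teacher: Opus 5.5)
Your proof is correct and follows the paper's argument: green independence via the triangle $t,g_1,g_2$; for adjacent orange vertices, a common green neighbor gives a triangle, and otherwise $t,g_1,o_1,o_2,g_2$ is an induced $C_5$. Your explicit chord check is just a more detailed version of the paper's one-line conclusion.
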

\begin{proof}
If there were an edge connecting two green vertices $g_1$ and $g_2$, then $t$, $g_1$, and $g_2$ would form a~$C_3$. It follows that the green vertices form an independent set.

Suppose now that there is an edge connecting two orange vertices $o_1$ and~$o_2$. Let $g_1$ be a green neighbor of $o_1$, and let $g_2$ be a green neighbor of $o_2$. If $o_2$ were adjacent to $g_1$ (which includes the case $g_1=g_2$), then $o_1$, $o_2$, and $g_1$ would form a~$C_3$. Thus, $o_2$ is not adjacent to $g_1$, and by the same argument, $o_1$ is not adjacent to~$g_2$. However, this shows that $o_1$, $o_2$, $g_1$, $g_2$, and $t$ form an induced~$C_5$.
\end{proof}

Let us fix some terminology. If two vertices $x,y\in\Vo\cup\Vp$ are adjacent,
we say that the two curves $x^*$ and $y^*$ form a \emph{pocket}. The pocket partitions the disk $D$ into several (at least two) faces. One of the faces contains the turquoise anchor $t^\bot$, and we say that this face is \emph{outside} the pocket, while all the remaining faces are \emph{inside}.

The \emph{OP-graph} (or \emph{orange-purple graph}) is the plane multigraph whose vertices are the anchors and tips of the orange and purple curves, and the crossings between these curves, while its edges are the maximal subcurves of the orange and purple curves that connect two vertices without passing through another. We will refer to the vertices, edges, and faces of the OP-graph as \emph{OP-vertices}, \emph{OP-edges} , and \emph{OP-faces} respectively.

A \emph{barrier} is a path in the OP-graph whose both end-vertices are anchors. Note that a barrier splits $D$ into two internally disjoint parts. The part that contains $t^\bot$ is said to be \emph{outside} the barrier, while the other part is \emph{inside}. We say that a barrier is \emph{separating} if there is at least one green anchor inside it. 

We define a \emph{chunk} of a barrier $\beta$ as a maximal subpath of $\beta$ whose OP-edges all belong to the same curve $v^*$ in the representation of~$G$. See Figure \ref{fig:OPgraph}.

\begin{figure}[ht]
    \centering
    \includegraphics[page=2]{Figures/C3C5proof.pdf}
    \caption{On the left we show a pocket formed by two curves $x^*,y^*$ with five faces, four of which are inside the pocket. On the right, we show an OP-graph and a separating barrier highlighted in yellow.}
    \label{fig:OPgraph}
\end{figure}

\begin{lemma}\label{lem-redbarrier}
Every separating barrier contains at least one orange OP-edge. 
\end{lemma}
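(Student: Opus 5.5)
Suppose for contradiction that a separating barrier $\beta$ uses only purple OP-edges. The plan is to turn $\beta$ into a path of purple vertices in $G$, and then apply Lemma~\ref{lem:crossingpaths} to this path together with the path $t,g$ for a green vertex $g$ inside $\beta$.

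First, convert the barrier into a path in $G$. List the chunks of $\beta$ in order; each belongs to a curve $v_1^*,\dots,v_m^*$ with every $v_i$ purple. Consecutive chunks meet at an OP-vertex that is a crossing of $v_i^*$ and $v_{i+1}^*$, so $v_iv_{i+1}\in E(G)$. Hence $v_1,\dots,v_m$ is a walk in $G$, and it can be shortcut to a path $P$ from $v_1$ to $v_m$ whose vertices are all purple. The end-vertices of $\beta$ are anchors. Since an anchor has degree one in the OP-graph, the first and last OP-edges of $\beta$ lie on $v_1^*$ and $v_m^*$, so these end-anchors are exactly $v_1^\bot$ and $v_m^\bot$ (possibly $v_1=v_m$, in which case $P$ is a single vertex).

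Next, check the alternation. Because $\beta$ is separating, some green anchor $g^\bot$ lies strictly inside $\beta$, while $t^\bot$ lies outside by definition. The two arcs of $\partial D$ between $v_1^\bot$ and $v_m^\bot$ are exactly the boundary parts of the inside and outside regions. So $t^\bot$ and $g^\bot$ lie on different arcs, i.e.\ $v_1,t,v_m,g$ alternate in the cyclic order. This needs $v_1\neq v_m$; I deal with the degenerate case below.

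Now apply Lemma~\ref{lem:crossingpaths} to $P$ and $Q=t,g$. We need $P$ and $Q$ independent. This holds because purple vertices are neither green (so not adjacent to $t$, and different from $t$) nor orange (so not adjacent to any green vertex, in particular $g$). The lemma concerns the ordered graph $G$ itself, not the representation of $G-t$, so it contradicts the solvability of $G$. Strictly, Lemma~\ref{lem:crossingpaths} needs only that $G$ contains no alternating-edge obstruction, together with the forbidden-configuration result of~\cite{biedlCKR:DAM26}. Alternatively, one can apply the lemma within $G-t$: there $P$ and a green path are represented, and the inside/outside separation is realized geometrically in $G^*_-$. The cleanest route, which I would take, is the direct geometric one. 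In $G^*_-$ the curve $g^*$ is anchored inside $\beta$ and cannot cross $\beta$, since $\beta$ consists of purple curves nonadjacent to $g$. The remaining point is to argue why this is impossible, and this is where the combinatorial Lemma~\ref{lem:crossingpaths} on $G$ is easiest.

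The main obstacle is the degenerate case in which the barrier starts and ends at the same anchor, or its chunks repeat curves so that $v_1=v_m$. In that case the two arcs argument degenerates. I would first verify that a path in the OP-graph between two anchors with a single curve involved cannot enclose any anchor, because a non-self-crossing curve gives a path that does not separate boundary points. More generally, I would shortcut the walk while keeping $v_1\ne v_m$, which is guaranteed since the end-anchors of $\beta$ are distinct OP-vertices.
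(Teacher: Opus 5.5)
You are right that the chunks of $\beta$ give a walk $v_1,\dots,v_m$ of purple vertices, that $v_1^\bot=A_1$ and $v_m^\bot=A_2$ are separated by $t^\bot$ and $g^\bot$, and that purple vertices have no neighbours in $\{t,g\}$. This matches the paper's setup.

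\textbf{The gap.} The final step, which carries all the content of the lemma, is missing. Lemma~\ref{lem:crossingpaths} only says that $G$ is not a solvable instance. But this lemma sits inside the inductive proof of Theorem~\ref{thm:C3C5-free}, whose purpose is to show that $G$ \emph{is} solvable. The only facts available are that $G$ has no alternating-edge obstruction and that $G-t$ has the representation $G^*_-$. So ``this contradicts the solvability of $G$'' is circular.

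Neither fallback closes the gap. Inside $G^*_-$ there is nothing contradictory about $g^*$ being enclosed by a purple barrier, because the edge $tg$ that would have to leave $\beta$ is not yet represented. Your remark that Lemma~\ref{lem:crossingpaths} ``needs only'' the absence of the alternating-edge obstruction is not what that lemma, or Lemma~2 of~\cite{biedlCKR:DAM26}, states. It is an unproved claim, and proving it is exactly what is required here.

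\textbf{The repair.} It is short and is the paper's proof. The points $t^\bot$ and $g^\bot$ cut $\partial D$ into two arcs, with $v_1^\bot$ on one and $v_m^\bot$ on the other. So along the walk there are consecutive vertices $v_i,v_{i+1}$ whose anchors lie on different arcs; the paper calls these consecutive chunks of type~1 and type~2. Consecutive chunks meet at a crossing, so $v_iv_{i+1}\in E(G)$. Together with $tg\in E(G)$ and the absence of edges between purple vertices and $\{t,g\}$, the four vertices $t,v_i,g,v_{i+1}$ induce an alternating-edge obstruction. This contradicts the hypothesis of the theorem.

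Applying this discrete intermediate-value step once along each path is what shows, in general, that two independent paths with alternating ends contain an alternating-edge obstruction. That is the fact you were implicitly relying on.

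With this argument, shortcutting the walk to a path is unnecessary. The degenerate case $v_1=v_m$ never arises either, since $A_1\neq A_2$ and each anchor belongs to exactly one curve.
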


\begin{proof}
Let $\beta$ be a separating barrier that contains no orange OP-edge. Let $A_1$ and $A_2$ be the two anchors of $\beta$, and let $g^\bot$ be a green anchor inside $\beta$, which exists since $\beta$ is separating. The two points $t^\bot$ and $g^\bot$ partition the boundary of $D$ into two arcs, one of which contains $A_1$ (call this arc $\alpha_1$), and the other contains $A_2$ (call it $\alpha_2$). Let $\gamma$ be a chunk of~$\beta$. We say that $\gamma$ is of \emph{type 1} if its anchor is in $\alpha_1$; otherwise, it is of \emph{type 2}. Since $\beta$ must contain chunks of both types, we can choose two consecutive chunks $\gamma_1$ and $\gamma_2$ with $\gamma_1$ of type 1 and $\gamma_2$ of type~2. Let $p_i$ denote the vertex whose curve $p_i^*$ contains $\gamma_i$. Since no purple vertex may be adjacent to a green or turquoise vertex, we conclude that $t$, $g$, $p_1$, and $p_2$ form an alternating edge obstruction. See Figure \ref{fig:barrierbicolored}.\qedhere

\begin{figure}
    \centering
    \includegraphics[page=3]{Figures/C3C5proof.pdf}
    \caption{Proof of Lemma \ref{lem-redbarrier}, chunks of type 1 are highlighted in light green and chunks of type 2 are highlighted in yellow. Vertices $p_1,t,p_2,g$ form an alternating-edge obstruction.}
    \label{fig:barrierbicolored}
\end{figure}
\end{proof}

\begin{lemma}\label{lem-barrier}
Every separating barrier $\beta$ contains an orange OP-edge $e$ with the following property: if $o^*$ is the orange curve that contains $e$, and if $f$ is the OP-face incident to $e$ that is inside $\beta$, then there is a purple curve $p^*$ forming a pocket with $o^*$ such that $f$ is inside the pocket. (Note that $p^*$ is not necessarily participating in~$\beta$.)
\end{lemma}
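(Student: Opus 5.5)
The plan is to look at how orange and purple chunks alternate along $\beta$ and to extract the required pocket from a single orange–purple transition. The transition will be chosen by an ``anchor type'' argument like the one in the proof of Lemma~\ref{lem-redbarrier}.

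\textbf{Setup.} By Lemma~\ref{lem-redbarrier}, $\beta$ contains an orange OP-edge, hence an orange chunk. By Lemma~\ref{lem-ind}, orange vertices are pairwise non-adjacent, so two orange curves never cross. Consecutive chunks of $\beta$ meet at a crossing point of their curves. So every chunk adjacent on $\beta$ to an orange chunk lies on a purple curve $p^*$ with $p$ adjacent to $o$, and hence $o^*$ and $p^*$ form a pocket. An orange chunk cannot be a whole barrier, since its curve has only one anchor. Therefore every orange chunk $\gamma\subseteq o^*$ has at least one purple neighbour chunk on $\beta$, meeting it at a crossing $x\in o^*\cap p^*$. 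This supplies candidate pairs $(e,p^*)$: let $e$ be the OP-edge of $\gamma$ incident to $x$, and let $p^*$ be the curve of the neighbouring chunk.

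\textbf{Locating the inside face.} Fix such a transition at $x$. Let $C$ be the closed curve formed by the part of $o^*$ from $o^\bot$ to $x$, the part of $p^*$ from $x$ to $p^\bot$, and the arc of $\partial D$ between $p^\bot$ and $o^\bot$ that avoids $t^\bot$. Every point separated from $t^\bot$ by $C$ is, in particular, separated from $t^\bot$ by $o^*\cup p^*$, so it lies inside the pocket. It therefore suffices to show that, for a suitable choice of transition, the face $f$ inside $\beta$ incident to $e$ lies on the non-$t^\bot$ side of $C$ near $x$. Near $x$, the barrier $\beta$ and the curve $C$ share the two half-edges at $x$ along $o^*$ and $p^*$. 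Hence the local side of $f$ is determined by whether ``inside $\beta$'' and ``inside $C$'' agree at $x$. This in turn depends on the relative position of the anchors $o^\bot,p^\bot$ with respect to $t^\bot$ and the anchors $A_1,A_2$ of $\beta$.

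\textbf{Choosing the transition.} As in Lemma~\ref{lem-redbarrier}, let $g^\bot$ be a green anchor inside $\beta$, and let $\alpha_1,\alpha_2$ be the two arcs of $\partial D$ determined by $t^\bot$ and $g^\bot$. Assign each chunk a type according to the arc containing its anchor. Walking from $A_1$ to $A_2$, there must be a switch from type~1 to type~2. By Lemma~\ref{lem-redbarrier}'s argument, two consecutive purple chunks cannot realise this switch without creating an alternating-edge obstruction with $t$ and $g$. So the switch happens between an orange chunk and a purple chunk. At such a switch, $o^\bot$ and $p^\bot$ lie on opposite sides of the segment joining $t^\bot$ and $g^\bot$. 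Then the arc of $C$ avoiding $t^\bot$ contains $g^\bot$. I expect this to force the inside of $\beta$ near $x$ to coincide with the inside of $C$, which would give the claim for $e$ and $p^*$.

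\textbf{Main obstacle.} The hard step is making this orientation argument rigorous. The subcurves of $o^*$ and $p^*$ in $C$ may cross each other many times, and they may also cross $\beta$ elsewhere, so ``the side of $C$'' is not a simple Jordan-curve statement. I would handle it by a parity (crossing-number) argument. Connect $t^\bot$ to a point of $f$ near $x$ by a path that runs close to $\beta$, so that it crosses $\beta$ exactly once. Then count its crossings with $o^*\cup p^*$ and compare this with the parity forced by the anchor separation above. If this becomes messy, the fallback is a proof by contradiction. Assume that no orange edge of $\beta$ has the property. Then reroute $\beta$ around each orange chunk along the purple curves bounding the faces adjacent to it, producing a separating barrier with no orange OP-edge, which contradicts Lemma~\ref{lem-redbarrier}.
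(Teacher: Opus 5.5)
There is a genuine gap, and it is the step you flag as the main obstacle. Your reduction to an orange--purple type switch is fine. The problem is the next claim: that at such a switch the inside face $f$ at $x$ lies in the pocket of \emph{that} orange--purple pair. This is not just hard to prove; it is false.

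Your assertion that $\beta$ and $C$ share both half-edges at $x$ fails whenever $\beta$ runs along the orange chunk on the tip side of $x$. Here is a concrete configuration:
\begin{itemize}
\item Put $t^\bot$ at the bottom and $g^\bot$ at the top of $D$. Put $A_1=p_1^\bot$ and, below it, $o^\bot$ on the left arc, and $A_2=p_2^\bot$ on the right arc.
\item Let $o^*$ cross $p_2^*$ exactly once, at $x$, so that the part of $o^*$ beyond $x$ lies on the $t^\bot$-side of $o^*[o^\bot,x]\cup p_2^*[x,A_2]$.
\item Let $p_1^*$ cross $o^*[o^\bot,x]$ once and then cross that tip part of $o^*$ at a point $y$, avoiding $p_2^*$.
\item Let $g^*$ cross only $o^*$.
\end{itemize}
The graph with edges $tg$, $go$, $op_1$, $op_2$ is a tree with no alternating-edge obstruction. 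The alternating pair $tg$, $op_2$ is not independent, because $go\in E(G)$. The path $\beta=p_1^*[A_1,y]\cup o^*[y,x]\cup p_2^*[x,A_2]$ is a separating barrier with chunk types $1,1,2$, so your switch is the $o$--$p_2$ transition at $x$.

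However, the orange chunk $o^*[x,y]$ lies entirely on the $t^\bot$-side of $o^*[o^\bot,x]\cup p_2^*[x,A_2]$. There the tips of $o^*$ and $p_2^*$ only dangle, so that side is a single face of $D\setminus(o^*\cup p_2^*)$. Hence $f$ is \emph{outside} the pocket of $o^*\cup p_2^*$; it is inside the pocket of $o^*\cup p_1^*$ instead. Anchor positions alone therefore cannot select the correct pocket.

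Your fallback does not repair this. You give no mechanism by which failure of the property yields a purely purple separating barrier. Moreover, in the purple--orange--purple situation the two flanking purple curves are anchored at $A_1\in\alpha_1$ and $A_2\in\alpha_2$. They can never cross, since that would give an alternating-edge obstruction with $t$ and $g$.

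The paper avoids choosing the pocket in advance and instead inducts on the number of chunks.
\begin{itemize}
\item If $\beta$ has an inner purple chunk on $p^*$, with say $p^\bot\in\alpha_1$, it follows $p^*$ from $p^\bot$ to its first point on $\beta_2$, then follows $\beta_2$ to $A_2$. This is a separating barrier with fewer chunks, whose orange OP-edges and their inside faces are inherited from $\beta$.
\item Otherwise $\beta$ is purple--orange--purple. Take curves $\delta_1,\delta_2$ from a point of $f$ to $t^\bot$, avoiding $o^*\cup p_1^*$ and $o^*\cup p_2^*$ respectively. Then $\beta_1$ has both endpoints in the outer face of $\delta_1\cup\delta_2$ yet crosses it an odd number of times. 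This parity contradiction shows $f$ is inside \emph{at least one} of the two pockets.
\end{itemize}
By the example above, this disjunction cannot be sharpened to the pocket of the switching pair.
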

\begin{proof}
As in the previous proof, let $A_1$ and $A_2$ be the two anchors of $\beta$, and let $g^\bot$ be a green anchor inside $\beta$. The two points $t^\bot$ and $g^\bot$ partition the boundary of $D$ into two arcs, one of which contains $A_1$ (call this arc $\alpha_1$), and the other contains $A_2$ (call it $\alpha_2$).

We prove the lemma by induction on the number of chunks in~$\beta$. Since $\beta$ contains two anchors, it must contain at least two chunks, and at least one of these chunks is orange by Lemma~\ref{lem-redbarrier}. If $\beta$ contains exactly two chunks, then it is contained in the union of an orange curve $o^*$ and a purple curve $p^*$. The two curves form a pocket, and any face inside $\beta$ is also inside this pocket. Therefore the lemma holds.

Assume now that $\beta$ has at least three chunks. The two chunks of $\beta$ containing $A_1$ and $A_2$ are said to be the \emph{outer chunks}, and the remaining chunks are the \emph{inner chunks}.

We will now distinguish two cases: either $\beta$ contains an inner purple chunk, or it does not.  Suppose first that there is an inner purple chunk $\gamma$, let $C_1$ and $C_2$ be the endpoints of $\gamma$, numbered so that the subpath of $\beta$ between $A_1$ and $C_1$ does not contain~$C_2$. In Figure \ref{fig:complicatedcase1}, we illustrate the steps for finding the OP-edge in this case, which we describe next. 

\begin{figure}[ht]
    \centering
    \includegraphics[page=4]{Figures/C3C5proof.pdf}
    \caption{Transforming a separating barrier $\beta = \beta_1 \cup\gamma\cup \beta_2$ on the left into a different separating barrier $\beta'$  with a smaller number of chunks (highlighted in green on the right).}
    \label{fig:complicatedcase1}
\end{figure}

Let $\beta_1$ be the subpath of $\beta$ between $A_1$ and $C_1$, and let $\beta_2$ be the subpath of $\beta$ between $A_2$ and~$C_2$. In particular, $\beta$ is the union of the three curves $\beta_1$, $\gamma$ and~$\beta_2$. 

Let $p^*$ be the purple curve containing $\gamma$ as a subset, and let $p^\bot$ be its anchor. Without loss of generality, assume that $p^\bot$ is in $\alpha_1$ (possibly $p^\bot=A_1$). 

We construct a new separating barrier $\beta'$ as follows: we start in $p^\bot$ and follow $p^*$ until we first reach a point $X$ belonging to $\beta_2$ (since $\beta_2\cap p^*$ contains the point $C_2$, such a point $X$ is well defined). From $X$, we then follow $\beta_2$ towards $A_2$, completing the barrier $\beta'$. The barrier $\beta'$ is separating, since $g^\bot$ is inside it.

Moreover, we observe that $\beta'$ has the following properties:
\begin{itemize}
    \item it has strictly fewer chunks than $\beta$,
    \item any orange OP-edge of $\beta'$ is also an OP-edge of $\beta$, and 
    \item for any orange OP-edge $e$ of $\beta'$, the OP-face incident to $e$ which is inside $\beta'$ is also inside~$\beta$.
\end{itemize}
We may now apply induction to $\beta'$, which yields an orange OP-edge $e$ of $\beta'$ contained in an orange curve $o^*$, such that there is a purple curve $p^*$ forming a pocket with $o^*$, and the OP-face $f$ incident to $e$ which is inside the barrier $\beta'$ is also inside the pocket of $o^*\cup p^*$. As observed above, the OP-edge $e$ also belongs to $\beta$, and the face $f$ is also inside the barrier $\beta$. Thus, $e$, $o^*$ and $p^*$ witness that the lemma holds for $\beta$ as well. This completes the case when $\beta$ has an inner purple chunk.

Suppose now that $\beta$ has no inner purple chunk. Since we assume that $\beta$ has at least three chunks, and since no two orange chunks can appear consecutively by Lemma~\ref{lem-ind}, it follows that $\beta$ has exactly three chunks, with the two outer chunks being purple and the sole inner chunk being orange. Call the three chunks $\beta_1$, $\gamma$ and $\beta_2$ in the order from $A_1$ to~$A_2$, and let $p_1^*$, $o^*$ and $p^*_2$ be the three curves containing $\beta_1$, $\gamma$ and $\beta_2$, respectively. 

Let $e$ be any orange OP-edge contained in $\gamma$, and let $f$ be the OP-face incident to $e$ which is inside~$\beta$. The curve $o^*$ forms a pocket with $p_1^*$ as well as with~$p_2^*$. If $f$ is inside at least one of these pockets, we are done, so suppose for contradiction that $f$ is outside both of them.

Fix a point $F$ in the interior of the OP-face~$f$. Since $f$ is outside the pocket of $o^*\cup p_1^*$, we can draw a curve $\delta_1$ connecting $F$ to $t^\bot$ without crossing $o^*\cup p_1^*$. Let us choose such $\delta_1$ in such a way that it avoids the boundary of $D$ except the point $t^\bot$, and crosses $\beta_2$ a finite number of times. By construction, $\delta_1$ avoids $\beta_1\cup \gamma$, and since $F$ is inside $\beta$ while $t^\bot$ is outside, $\delta_1$ crosses $\beta_2$ an odd number of times. Similarly, draw a curve $\delta_2$ connecting $F$ to $t^\bot$ while avoiding $o^*\cup p_2^*$, avoiding the boundary of $D$ except in $t^\bot$, and crossing $\beta_1$ an odd number of times.
We may further choose $\delta_2$ to only cross $\delta_1$ a finite number of times. 

Now consider the (possibly self-intersecting) closed curve $\delta:=\delta_1\cup\delta_2$, and let $G_\delta$ be the plane multigraph whose vertices are the intersection points of $\delta_1$ and $\delta_2$ (including $F$ and $t^\bot$) and whose edges are the subcurves of $\delta$ connecting two vertices without passing through another vertex. $G_\delta$ only has vertices of even degree ($F$ and $t^\bot$ have degree 2, all the other vertices have degree 4). Therefore its dual $G^*_\delta$ is bipartite. Note that the entire boundary of $D$ except the point $t^\bot$ belongs to the outer face of $G_\delta$. Furthermore, since $o^*$ never intersects $\delta$, the chunk $\gamma$ also belongs to the outer face of~$G_\delta$. Thus, the chunk $\beta_1$ has both endpoints in the outer face of $G_\delta$, while crossing $\delta$ an odd number of times. This means that the dual $G^*_\delta$ has a closed walk of odd length, which is impossible, since $G^*_\delta$ is bipartite. This contradiction completes the proof.
\end{proof}

The \emph{DOP-graph} (or \emph{partial dual-OP-graph}) is the directed plane multigraph whose vertices are the OP-faces, and the directed edges are defined as follows: for every orange OP-edge $e$, let $f$ and $f'$ be the two OP-faces separated by~$e$. If the orange curve $o*$ containing $e$ is part of a pocket which contains $f$ on its inside, then we add the directed edge $(f,f')$ to the DOP-graph. (Note that we may also add the opposite edge $(f',f)$, in fact the same pocket of $e$ may contain both $f$ and $f'$ on the inside.) We will draw the directed edge $(f,f')$ in such a way that it crosses its ``primal'' edge~$e$ and no other OP-edge, see Figure \ref{fig:DOP}.

\begin{figure}
    \centering
    \includegraphics[page=5]{Figures/C3C5proof.pdf}
    \caption{DOP-graph of $G$, for readability purposes, when both directed edges are present between the vertices, we draw them as a single edge oriented both ways.}
    \label{fig:DOP}
\end{figure}

Note that the DOP-graph does not depend on the green vertices.

Notice that Lemma~\ref{lem-barrier} can be reformulated as follows.

\begin{lemma}\label{lem-barrier2}
For any separating barrier $\beta$, the DOP-graph contains a directed edge crossing $\beta$ and oriented from the inside towards the outside of~$\beta$.
\end{lemma}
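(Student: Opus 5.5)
The plan is to derive this statement directly from Lemma~\ref{lem-barrier} by translating its conclusion into the language of the DOP-graph. Given a separating barrier $\beta$, I apply Lemma~\ref{lem-barrier} to obtain an orange OP-edge $e$ of $\beta$, the orange curve $o^*$ containing it, the OP-face $f$ incident to $e$ lying inside $\beta$, and a purple curve $p^*$ forming a pocket with $o^*$ such that $f$ is inside that pocket. Let $f'$ be the other OP-face incident to $e$. By the definition of the DOP-graph, since $e$ is an orange OP-edge separating $f$ and $f'$, and $o^*$ belongs to a pocket (namely $o^*\cup p^*$) containing $f$ on its inside, the directed edge $(f,f')$ is present in the DOP-graph, and it is drawn so that it crosses $e$ and no other OP-edge.

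The remaining work is to check that $f'$ lies outside $\beta$, so that $(f,f')$ indeed crosses $\beta$ from inside to outside. Since $\beta$ is a path in the OP-graph joining two anchors on the boundary of $D$, it splits $D$ into two regions, and every OP-face lies entirely in one of them, because OP-faces are not crossed by OP-edges. The OP-edge $e$ lies on $\beta$, so the two faces incident to $e$ are on opposite sides of $\beta$ locally near an interior point of $e$. The one exception would be a face appearing on both sides of $e$, which happens only when $e$ is a bridge-like edge with the same face on both sides. That cannot occur here: every point near $e$ on one side is inside $\beta$ and every point on the other side is outside, so the two sides lie in different regions and hence in different faces. Thus $f\neq f'$, $f$ is inside $\beta$, and $f'$ is outside.

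Finally, the drawn directed edge $(f,f')$ crosses the OP-graph only at $e$, and $e\subseteq\beta$, so it crosses $\beta$ exactly once, going from the inside face $f$ to the outside face $f'$. This is exactly the claim of Lemma~\ref{lem-barrier2}. I expect no real obstacle; the only point needing care is the observation that the two faces incident to an edge of a barrier lie on opposite sides of it, which follows from the Jordan-curve splitting of $D$ by $\beta$.
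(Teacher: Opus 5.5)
Your proposal is correct and matches the paper, which gives no separate proof and simply notes that Lemma~\ref{lem-barrier2} is a reformulation of Lemma~\ref{lem-barrier} via the definition of the DOP-graph. You additionally spell out why the face $f'$ on the other side of $e$ differs from $f$ and lies outside~$\beta$, a step the paper leaves implicit.
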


We now, at last, take the green vertices into account. Let $g$ be a green vertex. A \emph{trajectory} of $g$ is a directed curve $T(g)$ with the following properties:
\begin{itemize}
    \item $T(g)$ starts in the anchor $g^\bot$ and ends in a point inside the OP-face containing~$t^\bot$.
    \item $T(g)$ does not intersect any purple curve.
    \item $T(g)$ can only cross an orange edge $e$ in a way which is consistent with the DOP-graph; more precisely, if $T(g)$ exits from an OP-face $f_1$ and enters an OP-face $f_2$ by crossing an orange OP-edge $e$, then the DOP-graph must contain the directed edge $(f_1,f_2)$.
\end{itemize}

\begin{lemma}
Let $g$ be a green vertex. Then the DOP-graph contains a directed path from the OP-face that contains the anchor $g^\bot$ to the OP-face containing the anchor $t^\bot$. In particular, $g$ has a trajectory.
\end{lemma}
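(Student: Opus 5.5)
The plan is a reachability argument in the DOP-graph, using Lemma~\ref{lem-barrier2} as a cut condition. Let $f_g$ be the OP-face containing $g^\bot$, and let $f_t$ be the OP-face containing $t^\bot$. Let $R$ be the set of OP-faces reachable from $f_g$ by directed paths in the DOP-graph. Assume, for contradiction, that $f_t\notin R$. We will build a separating barrier $\beta$ such that all faces of $R$ lie inside $\beta$ and $f_t$ lies outside. In particular, $g^\bot$ will be inside, so $\beta$ is separating. Lemma~\ref{lem-barrier2} then gives a DOP-edge crossing $\beta$ from inside to outside. Since the faces inside $\beta$ form exactly the closure we are about to define, this contradicts the maximality of $R$.

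To construct $\beta$, let $U$ be the closure of the union of the faces in $R$. Then take the region $W$: the connected component of $D\setminus U$ that contains $t^\bot$ (or rather its interior). The boundary between $W$ and the rest of $D$ consists of OP-edges, together with arcs of $\partial D$. The key step is to extract from this boundary a single path in the OP-graph whose two endpoints are anchors, with $W$ on one side and $f_g$ on the other. Since $D$ is a disk and $W$ is connected and touches $\partial D$, the part of $\partial W$ that is interior to $D$ should be a union of arcs with endpoints on $\partial D$. Endpoints of OP-edges on $\partial D$ are anchors, because curves meet $\partial D$ only at anchors.

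We choose the arc $\beta$ that separates $t^\bot$ from $g^\bot$. Some such arc must exist, because $f_g\subseteq U$ is disjoint from $W$. We will need to check that this arc can be taken to be a simple path. If it revisits an OP-vertex, we shortcut it; this keeps the faces on the $W$-side outside and $f_g$ inside. Every OP-edge on $\beta$ has a face of $R$ (or a face separated from $W$) on its inside and a face of $W$ on its outside.

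Now apply Lemma~\ref{lem-barrier2} to get a directed DOP-edge $(f,f')$ crossing an orange OP-edge $e$ of $\beta$, with $f$ inside and $f'$ outside. Then $f'\subseteq W$, so $f'\notin R$. We need $f\in R$. This is the main obstacle, because the inside of $\beta$ may contain faces not in $R$: holes of $U$, or other components of $D\setminus U$. To handle this, we choose $\beta$ as part of $\partial W$, so that every OP-edge of $\beta$ is adjacent on its inside to a face of $U$, that is, to a face of $R$. Indeed, any face inside $\beta$ that is adjacent to an edge of $\partial W$ must lie in $U$; otherwise it would be merged into $W$. Hence $f\in R$, giving the contradiction.

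Finally, to obtain the trajectory, we follow the directed path $f_g=f_0,\dots,f_m=f_t$. Inside each face we move freely, since faces contain no purple or orange curves in their interior. We cross from $f_i$ to $f_{i+1}$ through the orange OP-edge dual to the DOP-edge, which is allowed by the definition of the trajectory and avoids purple curves. The path starts at $g^\bot$ on $\partial D$, which is fine. We also note a degenerate case: if $g^\bot$ and $t^\bot$ lie in the same face, the trajectory is trivial.
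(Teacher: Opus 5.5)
Your proposal is correct and follows essentially the same route as the paper. The paper takes the faces $R$ reachable from $f_g$ and the unreachable component containing $t^\bot$, extracts from their common boundary a separating barrier with reachable faces on the inside of its edges and unreachable faces on the outside, and contradicts Lemma~\ref{lem-barrier2}. You supply more detail than the paper does (why the inner face at each barrier edge lies in $R$, simplicity of the barrier, the explicit trajectory). The one thing to tidy is the justification that $W$ has no holes: it follows from $R$ being edge-connected and meeting $\partial D$ at $g^\bot$, not merely from $W$ touching $\partial D$.
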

\begin{proof}
Suppose the lemma fails. Let $R$ be a region of $D$ obtained as the union of all the closed OP-faces that can be reached by an oriented path in the DOP-graph from the OP-face containing $g^\bot$. We call the faces in $R$ \emph{reachable}. Let $U$ be similarly the union of all the unreachable closed OP-faces. Note that $U$ may fail to be topologically connected. Let $U_t$ be the topological connectivity component of $U_t$ that contains $t^\bot$ on its boundary; in other words, $U_t$ is a minimal subset of $U$ with the property that the closed OP-face containing $t^\bot$ is in $U_t$, and for any face contained in $U_t$, all its adjacent unreachable OP-faces are in~$U_t$. 

The shared boundary of $R$ and $U_t$ contains a barrier $\beta$ with $R$ being inside $\beta$. Since $g^\bot$ is on the boundary of $R$, $\beta$ is a separating barrier. By Lemma~\ref{lem-barrier2}, the DOP-graph contains a directed edge crossing $\beta$ from the inside to the outside, contradicting the definition of~$R$.
\end{proof}

We now construct an outer-string representation of $G$. To do this, we will modify the representation of the green vertices in such a way that each green vertex is represented by a curve that intersects the OP-face containing $t^\bot$.

First, we fix a trajectory $T(g)$ for every green vertex $g$. Then, processing the green vertices sequentially one by one, we perform the following steps for each green vertex $g$:
\begin{itemize}
    \item  Extend $g^*$ by retracing it from its tip back to its anchor $g^\bot$.
    \item Add $T(g)$ to $g^*$ by ``digging a tunnel'' along $T(g)$ starting from $g^\bot$. 
    \item Every time the tunnel reaches a point where $T(g)$ crosses a curve representing a green vertex $g_2$ (possibly equal to $g$), the curve $g^*_2$ is ``pushed in front'' of the tip of the tunnel, to avoid creating intersections between green curves, see Figure \ref{fig:tunneling}. 
    \item The process for $g$ stops, when its tunnel, and all the green curves being pushed in front of it, reaches into the interior of the OP-face containing~$t^\bot$.
    \item Once all the green vertices have been processed, we represent $t$ by an arbitrary curve $t^*$ starting in $t^\bot$ and crossing all the green curves without crossing any purple or orange curve.
\end{itemize}

\begin{figure}
    \centering
    \includegraphics[page=6]{Figures/C3C5proof.pdf}
    \caption{Extending $g^*$ along its trajectory $T(g)$, $g_2^*$ is being ``pushed in front'' of $g$.}
    \label{fig:tunneling}
\end{figure}

It remains to argue that the resulting arrangement of curves is a valid representation of~$G$. Clearly all the edges of $G$ are represented. We thus only need to show that whenever the above construction creates an intersection between a green curve and an orange curve, then the two corresponding vertices are adjacent in~$G$. 

Suppose this is not the case, and let us focus on the moment in the construction right before the first illegal crossing is generated. Suppose the illegal crossing involves a green vertex $\bar g$ and an orange vertex $o$, and happens in the process of digging a tunnel for a green vertex $g$ (possibly equal to $\bar g$), which is about to cross an orange OP-edge $e\subseteq o^*$ from an OP-face $f_1$ to an OP-face $f_2$. By construction, the DOP-graph must therefore contain a directed edge $(f_1,f_2)$, and by the DOP-graph's definition, there must be a purple curve $p^*$ forming a pocket with $o^*$ with $f_1$ inside the pocket.

We now distinguish two possibilities, depending on the position of the anchor $\bar g^\bot$ of $\bar g$. If the anchor is inside the pocket of $o^*$ and $p^*$, then the four vertices $\bar g, o, p, t$ form an alternating-edge obstruction, a contradiction.

Therefore, $\bar g^\bot$ is outside the pocket of $o^*$ and $p^*$. However, immediately before creating the first illegal green-orange crossing, the curve $g^*$ is inside the face $f_1$, which is inside the pocket. Therefore, $\bar g^*$ must already have crossed $o^*$ before the first illegal crossing was created, a contradiction.

This concludes the proof of Theorem~\ref{thm:C3C5-free}. Since bipartite graphs are a subclass of $\{C_3,C_5\}$-free graphs, we obtain the following direct consequence.

\begin{corollary}\label{cor:bipartite}
If $G$ is an ordered bipartite graph, then a constrained outer-string instance for $G$ is solvable if and only if it does not contain the alternating-edge obstruction.
\end{corollary}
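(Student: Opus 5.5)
The corollary follows by specializing Theorem~\ref{thm:C3C5-free}. We only need to check that every bipartite graph is $\{C_3,C_5\}$-free. A bipartite graph contains no odd cycle at all, as a subgraph or as an induced subgraph. Traversing any cycle alternates between the two colour classes, so every cycle has even length. In particular, the underlying graph of an ordered bipartite graph $(G,\pi)$ has no induced $C_3$ and no induced $C_5$, and the footnote's requirement on the unordered graph is met.

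We then apply Theorem~\ref{thm:C3C5-free} to $(G,\pi)$ as it stands; the cyclic order plays no role in the hypothesis. The theorem says the constrained instance is solvable if and only if it contains no alternating-edge obstruction, which is exactly the claim.

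If one wants the two directions spelled out: necessity holds for arbitrary ordered graphs. The co-hole of length $4$ is an outer-string obstruction, and equivalently it is the case of Lemma~\ref{lem:crossingpaths} with two single-edge paths. Sufficiency is precisely the nontrivial direction of the theorem.

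There is no real obstacle here. The only point to state is that the inclusion of bipartite graphs in $\{C_3,C_5\}$-free graphs is with respect to the underlying unordered graph, so any prescribed order is allowed.
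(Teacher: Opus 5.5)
Your proposal is correct and is exactly the paper's argument: bipartite graphs contain no odd cycles, so they are $\{C_3,C_5\}$-free, and the corollary follows directly from Theorem~\ref{thm:C3C5-free}. Your remark that necessity is the two-single-edge case of Lemma~\ref{lem:crossingpaths} is a correct aside.
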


Testing whether a given ordered graph $G$ contains the alternating-edge obstruction can be trivially done in polynomial time, yielding the next corollary.

\begin{corollary}\label{cor:polynomial}
For an ordered $\{C_3,C_5\}$-free graph $G$, we can decide in polynomial time whether $G$ has a constrained outer-string representation.
\end{corollary}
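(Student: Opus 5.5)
By Theorem~\ref{thm:C3C5-free}, for an ordered $\{C_3,C_5\}$-free graph $(G,\pi)$ the existence of a constrained outer-string representation is equivalent to the absence of an alternating-edge obstruction. The algorithm therefore never constructs a representation (which might need exponentially many crossings). It only searches for an induced copy of $-C_4$ whose vertices appear in $\pi$ in the alternating pattern.

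Concretely, an alternating-edge obstruction is a pair of edges $ab$ and $cd$ on four distinct vertices such that:
\begin{itemize}
\item the four vertices appear in the cyclic order $a,c,b,d$ along $\pi$, so the edges alternate;
\item none of $ac,ad,bc,bd$ is an edge of $G$, so the two edges are independent.
\end{itemize}
I would enumerate all ordered pairs of edges, or simply all $4$-tuples of vertices, in $O(n^4)$ time. For each candidate I check the adjacency conditions in constant time via an adjacency matrix. I check alternation in constant time by comparing the positions of the four vertices in $\pi$, after fixing a linear representative of the cyclic order.

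The algorithm answers ``yes'' exactly when no candidate passes both checks. Correctness follows directly from Theorem~\ref{thm:C3C5-free}: the ``only if'' direction is the trivial observation that an obstruction prevents a representation, and the ``if'' direction is the theorem's substantive content. The input is assumed to be $\{C_3,C_5\}$-free. If one also wants to verify this hypothesis, it can be done in polynomial time: $C_3$ takes $O(n^3)$, and an induced $C_5$ takes $O(n^5)$ by brute force.

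I expect no real obstacle. The only subtlety is making precise the statement that the four anchors alternate with respect to a cyclic, rather than linear, order. This is settled by the fixed-representative comparison: $\{a,b\}$ separates $\{c,d\}$ iff exactly one of $c,d$ lies strictly between $a$ and $b$ in the linear representative.
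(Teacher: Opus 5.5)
Your proposal is correct and follows the paper's argument: Theorem~\ref{thm:C3C5-free} reduces the question to detecting an induced alternating-edge obstruction, which the paper calls trivially polynomial and which you make explicit as an $O(n^4)$ brute-force search with a constant-time cyclic alternation test.
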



\section{NP-Hardness of Recognition of Outer-1-String Graphs}\label{sec:recog-outer-1-string}

Recall that an outer-string representation of a graph $G$ in which each two curves cross at most $k$ times is called an outer-$k$-string representation of $G$. In this section, we prove that it is NP-hard to decide whether a given graph $G$ admits an outer-1-string representation.
This answers a question of Biedl et al. \cite{biedlCKR:gd24,biedlCKR:DAM26}. (Note that NP-membership is straightforward.) In fact, we prove the following stronger result. 

\begin{theorem}\label{thm:nphardness}
Let $\mathcal{G}$ be a class of graphs that contains all outer-1-string graphs and is a subclass of outer-string graphs. It is then NP-hard to decide whether a given graph $G$ is in $\mathcal{G}$.
\end{theorem}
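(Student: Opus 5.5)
We reduce from an NP-complete satisfiability variant, chosen so that the reduction has a ``sandwich'' property. From an instance $\Phi$ we build in polynomial time a graph $G_\Phi$ such that:
\begin{enumerate}
\item if $\Phi$ is satisfiable, then $G_\Phi$ has an outer-$1$-string representation;
\item if $G_\Phi$ has any outer-string representation (with unrestricted crossings), then $\Phi$ is satisfiable.
\end{enumerate}
Every $\mathcal{G}$ with outer-$1$-string $\subseteq\mathcal{G}\subseteq$ outer-string then satisfies $G_\Phi\in\mathcal{G}$ if and only if $\Phi$ is satisfiable. This gives NP-hardness for all such classes at once, in particular for outer-$k$-string graphs with any fixed $k\ge 1$. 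For the source problem we use planar monotone 3-SAT or planar positive 1-in-3-SAT, since it comes with a rectilinear embedding that is easy to lay out inside a disk.

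\textbf{Rigid frame.} We force a fixed cyclic structure using a rigid ``frame'' subgraph. For example, take a long path or cycle-like skeleton of vertices, and attach to each skeleton vertex private pendant vertices and subdivided gadgets. We must show that in every outer-string representation the anchors of the frame appear in an essentially fixed cyclic order up to reflection. The tool is Lemma~\ref{lem:crossingpaths} (via Lemma~2 of~\cite{biedlCKR:DAM26}): two independent connected subgraphs cannot have alternating anchors. This lets us argue that the anchor order of independent pieces of the frame is forced. The frame then cuts the disk into cells, and the variable and clause gadgets must live in these cells.

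\textbf{Gadgets.} A variable gadget is a cell in which a string (or chain of strings) must pass on one of two sides of an obstacle. The two sides encode true and false, and each choice propagates along a ``wire'' of alternating strings to the clause cells. A clause gadget is a cell where three wire ends meet. The gadget admits a representation exactly when the required number of incoming wires are in the satisfying state, for instance when at least one wire leaves room for a clause vertex to reach the boundary or a designated string.

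The completeness direction is an explicit drawing. We draw all strings as near-straight segments along the rectilinear layout, and check that each adjacent pair crosses exactly once and nonadjacent pairs are disjoint. This works because every wire is a path of simple crossings.

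\textbf{Main obstacle.} The hardest step is soundness: we must extract a truth assignment from an arbitrary outer-string representation, where curves may cross exponentially often and wind around. We would avoid all crossing-count arguments and use only topological, anchor-order statements. These are: non-alternation of independent connected subgraphs (Lemma~\ref{lem:crossingpaths}), Jordan-curve arguments on pockets formed by two intersecting anchored strings (as in Lemmas~\ref{lem-redbarrier} and~\ref{lem-barrier}), and the fact that a string anchored outside a pocket cannot reach its interior without crossing one of the two pocket strings.

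From these we would define the truth value of a variable by which anchor arc, or which side of a barrier, certain gadget strings occupy. We then need to show this value is consistent along each wire, again by non-alternation. Finally, we need to show each clause pocket would be unreachable for its clause string if all three incoming wires were false. We expect making the frame genuinely rigid to require the most care, likely by padding each frame vertex with many disjoint pendant paths so that any deviation creates an alternating pair of independent paths.
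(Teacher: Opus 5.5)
Your sandwich framework (satisfiable $\Rightarrow$ outer-$1$-string, and outer-string $\Rightarrow$ satisfiable) is exactly how the paper gets hardness for every intermediate class $\mathcal{G}$, and Lemma~\ref{lem:crossingpaths} is the right tool. Beyond that, the proposal is a plan rather than a proof. No graph $G_\Phi$ is ever defined, so neither direction can be checked. The soundness direction, which you yourself call the main obstacle and which is the heart of the theorem, is only announced.

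The mechanisms you sketch also do not fit the outer-string setting. Notions like ``which side of an obstacle a string passes'' or a wire ``leaving room'' are not determined by the graph when adjacent curves may cross arbitrarily often and wind around each other. Each would need its own topological proof.

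More seriously, the planar rectilinear layout does not transfer. Every string must reach the boundary, so gadgets do not sit in interior cells; their anchors lie in arcs of the circle. Suppose variable and clause gadgets are confined to disjoint arcs, and wires of different variable--clause pairs are independent. Then Lemma~\ref{lem:crossingpaths} forbids two wires with four distinct end-gadgets from having alternating ends. Hence the incidence graph of $\Phi$ would be outerplanar in the anchor order, and such formulas have treewidth at most $2$ and are solvable in polynomial time. So planarity of the source problem buys nothing. Wires must cross, crossing wires must then be adjacent, and you would have to show that these extra edges do not destroy the independence soundness relies on. (Also, a path skeleton is not rigid: its ends can fold back without creating any alternating independent pair.)

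The missing idea, which makes the paper's proof work, is to encode a truth value solely in the cyclic order of two anchors, and to make the clause test a cyclic contradiction among anchor-order constraints.
\begin{itemize}
\item The frame is just an even cycle $\mathbb{C}$, whose even-indexed anchors are forced into cyclic order (Lemma~\ref{lemma:cyclerepresentation}).
\item Non-alternation then pins $R_v,B_v$ to the arc of $v$ and $U_C,V_C,W_C$ to the arc of $C$ (Lemma~\ref{lem:anchorsofmainvertices}, using that each variable is in at least two clauses).
\item The value of $v$ is whether $R_v^\bot$ precedes $B_v^\bot$.
\item For each variable of $C$, two independent assigner--connector--verifier paths force the order of one pair of verifiers. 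The three pairs form a triangle, so all three values agreeing yields a cyclic contradiction. This test cannot tell all-true from all-false, which is why the source problem is monotone NAE-3SAT.
\item Non-planarity is absorbed by making the connectors of $(v_a,C_x)$ and $(v_b,C_y)$ adjacent whenever $a<b$ and $x<y$. These are exactly the pairs that cross in the $1$-string drawing of Lemma~\ref{lemma:firstdirection}. Such edges only join connectors of different pairs, while each soundness argument uses connectors of a single pair, so independence is preserved.
\end{itemize}
No wires, cells, or planarity are needed.
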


The rest of this section is devoted to the proof of Theorem \ref{thm:nphardness}.

We construct a reduction from \textsc{Monotone Not-all-equal 3-SAT}, which is known to be NP-complete \cite{Schaefer-STOC78}.
Formally, we are given a Boolean formula $\varphi$ that consists of $m$ clauses $C_1,\dots,C_m$, where each clause is the disjunction of exactly three non-negated literals taken from the set of boolean variables $\{v_1,v_2,\dots,v_n\}$.  The goal is to find an assignment of the variables such that every clause has at least one but no more than two variables that are \textsc{True}. 
Assume that we are given an instance $\varphi$ of \textsc{Monotone Not-all-equal 3-SAT}. We will additionally assume that in $\varphi$, each variable is contained in at least two clauses. Note that this assumption does not alter the fact that the problem is NP-complete.
From $\varphi$ we construct a graph $G$ such that the following is satisfied: 

\begin{itemize}
    \item If $\varphi$ admits a satisfying truth assignment, then $G$ admits an outer-1-string representation. 
    \item If $G$ admits an outer-string representation, then $\varphi$ admits a satisfying assignment. 
\end{itemize}

To construct $G$, we begin with a cycle on $10(n+m)$ vertices, which we will denote by $\mathbb{C}$ and whose vertices we denote by $c_1,c_2,\dots, c_{10(n+m)}$ in the cyclic order of the cycle~$\mathbb{C}$. To each variable $v_i$ and each clause $C_j$, we associate a vertex in $\mathbb{C}$ as follows: to a variable $v_i$, we associate the vertex $c_{10i}$, and to a clause $C_j$, we associate the vertex $c_{10(n+j)}$. 

Let us describe the remaining vertices of~$G$. For every variable $v\in\{v_1,\dotsc,v_n\}$, $G$ 
contains two vertices $R_v$ and $B_v$, which we refer to as the \emph{red assigner} and 
\emph{blue assigner} of the variable~$v$. For every clause $C\in\{C_1,\dotsc,C_m\}$, $G$ 
contains three vertices $U_C$, $V_C$ and $W_C$, which we refer to as the \emph{verifiers} of 
the clause~$C$. For any clause $C$ and variable $v$ contained in $C$, $G$ contains a pair of 
vertices $r(v,C)$ and $b(v,C)$, referred to as the \emph{red connector} and \emph{blue 
connector} of the pair $(v,C)$.

We now describe the edges of~$G$. Recall that the vertices $c_1,\dotsc,c_{10(m+n)}$ form a 
cycle in~$G$. For a variable $v_i$, the vertex $c_{10i}$, which is associated to $v_i$, is 
adjacent to all the connectors $r(v_i,C)$ and $b(v_i,C)$ for all the clauses $C$ that 
contain~$v_i$. Similarly, for every clause $C_j$, the vertex $c_{10(n+j)}$ is adjacent to all 
the connectors $r(v,C_j)$ and $b(v,C_j)$ for all the variables $v$ in~$C_j$. 

Let $v\in\{v_1,\dotsc,v_n\}$ be a variable, let $\ell\ge 2$ be the number of clauses containing $v$, and let $C_{x_1}(v), C_{x_2}(v), \dots, C_{x_{\ell}}(v)$ be the clauses containing $v$, with $1\le x_1<x_2<\cdots <x_\ell\le m$. Let us write $r_i$ for $r(v,C_{x_i})$ and $b_i$ for $b(v,C_{x_i})$. Then the red connectors $r_1,\dotsc,r_\ell$ form a clique in $G$, and so do the blue connectors $b_1,\dotsc,b_\ell$. Moreover, $R_v$ is adjacent to the red connectors $r_1,\dotsc,r_\ell$, while $B_v$ is adjacent to $b_1,\dotsc,b_\ell$. In addition, $r_i$ is adjacent to $b_j$ if and only if $i\neq j$. The subgraph of $G$ induced by the $2\ell+2$ vertices $R_v, B_v, r_1,\dotsc,r_\ell, b_1,\dotsc,b_\ell$ is called the \emph{variable gadget} of the variable $v$, and is denoted by $H(v)$; see Figure~\ref{fig:variablegadget}.

\begin{figure}
    \centering
    \includegraphics[page=5]{Figures/reduction.pdf}
    \caption{Variable gadget $H(v)$, with $\ell=4$.}
    \label{fig:variablegadget}
\end{figure}

Let now $C\in\{C_1,\dotsc,C_m\}$ be a clause, and let $v_{a_1}$, $v_{a_2}$ and $v_{a_3}$ be the three variables in $C$, with $a_1<a_2<a_3$. Let us now write $r_i$ and $b_i$ for the connectors $r(v_{a_i},C)$ and $b(v_{a_i},C)$, for $i\in\{1,2,3\}$. Then $U_C$ is adjacent to $r_1$ and $b_2$, $V_C$ is adjacent to $r_2$ and $b_3$, and $W_C$ is adjacent to $r_3$ and~$b_1$. The red connectors $r_1$, $r_2$ and $r_3$ form a clique, and so do the blue connectors $b_1$, $b_2$ and $b_3$, and moreover, $r_i$ is adjacent to $b_j$ if and only if $i\neq j$. The subgraph of $G$ induced by the nine vertices $U_C$, $V_C$, $W_C$, $r_1$, $r_2$, $r_3$, $b_1$, $b_2$ and $b_3$ is the \emph{clause gadget} of the clause $C$, denoted $K(C)$; see Figure~\ref{fig:clausegadget}.

To complete the description of $G$, we need to describe the adjacencies between connectors corresponding to different clauses and different variables. Let $v_a$ and $v_b$ be two distinct variables such that $a<b$, and let $C_x$ and $C_y$ be two clauses such that $C_x$ contains $v_a$ and $C_y$ contains~$v_b$. If $x<y$, then both $r(v_a,C_x)$ and $b(v_a,C_x)$ are adjacent to both $r(v_b,C_y)$ and $b(v_b,C_y)$. On the other hand, if $x>y$, there is no adjacency between the connectors of $(v_a,C_x)$ and those of $(v_b,C_y)$.

There are no other vertices or edges in $G$ except for those described above.

\begin{figure}[ht]
    \centering
    \includegraphics[page=6]{Figures/reduction.pdf}
    \caption{Clause gadget $K(C)$.}
    \label{fig:clausegadget}
\end{figure}

\begin{lemma} \label{lemma:firstdirection}
    If $\varphi$ has a satisfying assignment, then $G$ admits an outer-1-string representation. 
\end{lemma}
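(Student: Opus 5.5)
Given a satisfying not-all-equal assignment, we construct an explicit outer-1-string representation. The cycle $\mathbb{C}$ is drawn as a ``necklace'' of short strings anchored consecutively along the boundary of $D$. Each $c_i$ crosses $c_{i-1}$ and $c_{i+1}$ once near the boundary, and all of them stay inside a thin annulus near $\partial D$. Every other vertex is anchored in the gap between two consecutive cycle anchors and leaves the annulus by squeezing between two cycle strings without touching them. The cycle vertex $c_{10i}$ of variable $v_i$ is given a long ``tentacle'' that reaches into the central region. There it must be crossed by all connectors $r(v_i,C)$ and $b(v_i,C)$. The same holds for $c_{10(n+j)}$ of clause $C_j$. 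The spacing by $10$ leaves room in the boundary order for each gadget.

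The core of the construction is a grid-like layout of the connectors. The connectors of $v_a$ are anchored near $c_{10a}$, in the order $C_{x_1},\dots,C_{x_\ell}$. The clause part of each connector string is routed toward $c_{10(n+x)}$, in the manner of a permutation diagram. Two strings of pairs $(v_a,C_x)$ and $(v_b,C_y)$ with $a<b$ should cross exactly when $x<y$. So we draw each connector as an ``L-shaped'' curve: anchored in the variable region, it goes radially inward to a level determined by its clause index, then turns and runs along a circular arc toward the clause region. With anchor angles ordered by variable index and turning radii ordered by clause index, the pair crosses exactly once if and only if $x<y$, which is the required adjacency pattern. Within one gadget, the red and blue connectors of the same pair are drawn as two nearly parallel strings. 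The red ones pairwise cross once, and so do the blue ones. In each variable gadget $r_i$ and $b_j$ cross once for $i\ne j$ and are disjoint for $i=j$, and the same holds in each clause gadget. Here the truth value enters: if $v$ is \textsc{True}, the red copy lies on the ``inner'' side of the pair along the whole route; if \textsc{False}, the blue copy does. Then $R_v$ (resp.\ $B_v$) is a short string anchored beside $c_{10a}$. It crosses exactly the red (resp.\ blue) bundle near the point where that bundle is outermost, which is possible because the bundles do not interleave at their ends. At each clause, the verifiers $U_C,V_C,W_C$ must hit $\{r_1,b_2\}$, $\{r_2,b_3\}$ and $\{r_3,b_1\}$ respectively. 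They are anchored near $c_{10(n+j)}$ and each enters between consecutive pairs. The not-all-equal condition guarantees that the red/blue sides of the three pairs are not all the same. With the three pairs arranged in cyclic order, this leaves each of the three verifiers an accessible face containing exactly its two targets.

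After the drawing we check the at-most-one-crossing condition pair by pair. Cycle--cycle, cycle--connector and assigner/verifier crossings are each single by construction. Connector--connector crossings are single because the L-shapes are monotone in angle and radius. The main obstacle is the local drawing at each clause gadget. We must realize the verifiers of all satisfying patterns (up to symmetry: one or two true variables out of three) with single crossings, while also keeping $r_i b_i$ non-crossing and every other red--blue pair crossing exactly once. I would first handle these finitely many cases by explicit pictures, using the cyclic symmetry $U\to V\to W$ of the gadget. Then I would show that the local picture glues into the global L-shaped routing without introducing additional crossings.
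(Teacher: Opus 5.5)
Your overall plan matches the paper's: draw $\mathbb{C}$ along the boundary, put each gadget next to its cycle vertex, and route connector pairs in permutation-diagram fashion so that the pairs of $(v_a,C_x)$ and $(v_b,C_y)$, $a<b$, cross iff $x<y$. The truth value fixes the red/blue orientation, and the clause gadgets are drawn case by case. However, two of the concrete steps you describe cannot work.

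\textbf{The cycle frame.} Suppose $c_k$ and $c_{k+1}$ have consecutive anchors and cross. The boundary arc between $c_k^\bot$ and $c_{k+1}^\bot$, together with the pieces of $c_k^*$ and $c_{k+1}^*$ up to their first common point, bounds a closed region. Any curve anchored in that gap can leave it only by meeting $c_k^*$ or $c_{k+1}^*$, so nothing can ``squeeze between two cycle strings without touching them''. Three consequences follow.
\begin{itemize}
\item $R_v,B_v,U_C,V_C,W_C$ have no neighbours on $\mathbb{C}$. They must therefore be anchored in a gap bounded by $c_{10a}$ (resp.\ $c_{10(n+j)}$) and meet all their connectors inside that pocket.
\item Each connector must leave the variable pocket through its unique crossing with $c_{10a}^*$ and enter the clause pocket through its unique crossing with $c_{10(n+x)}^*$.
\item A ``tentacle'' crossing in the central region is possible only if the tentacle is itself the wall of the pocket. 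It cannot be an additional crossing made after the connector has left the annulus.
\end{itemize}
This is exactly the paper's set-up. All anchors of $H(v_i)$ lie between $c_{10i}^\bot$ and $c_{10i+1}^\bot$, and the gadget part of a connector ends at its crossing with $c_{10i}^*$. The connector is then extended until it crosses $c_{10(n+x)}^*$, and the verifiers are anchored between $c_{10(n+x)}^\bot$ and $c_{10(n+x)+1}^\bot$.

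\textbf{The assigners.} In an outer-$1$-string representation a touching point counts twice, so $R_v$ must properly cross every $r_i$. In your layout each pair $r_i,b_i$ is a thin parallel strip, and the pairs of $v$ cross each other as bundles at distinct points (the L-shaped grid). The first time $R_v$ crosses some $r_i$, it enters the strip between $r_i$ and $b_i$. Inside the pocket this strip is cut by the other pairs, and each cut consists of an adjacent $r_j,b_j$. So $R_v$ can cross at most one further red, after which it is locked in the parallelogram bounded by $r_i,b_i,r_j,b_j$. Hence $R_v$ meets at most two reds, while $v$ may lie in $\ell\ge 3$ clauses.

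The variable gadget therefore needs a different local drawing, in which the reds reach $R_v$ without being shielded by their blue partners. For example, all pairs of $v$ can pass through a common region, with $R_v$ entering from the wedge beyond all red sides and $B_v$ from the opposite wedge. The paper's figure for $H(v_i)$ plays this role. It realizes all same-variable crossings inside the pocket, so the routing only produces crossings between pairs of different variables and different clauses. The same-clause pairs are kept disjoint until the clause gadget. Your clause step is, like the paper's, a picture still to be drawn. But it must be drawn inside the clause pocket, with the connectors entering through $c_{10(n+x)}^*$, and that is not the configuration you describe.
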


\begin{proof}
    We will represent the vertices by curves grounded on a horizontal line $\ell$, with each two curves crossing at most once.  We do this so that we can talk about two vertices being anchored to the left or to the right from each other. Further, we 
     represent vertices of $\mathbb{C}$ in such a way that the anchors $c_2^{\bot},\dots, c_{10(n+m)}^{\bot},c_1^{\bot}$ are seen in this left-to-right order starting from $c_2^\bot$. Such a representation of $\mathbb{C}$ can be seen in Figure \ref{fig:cyclerepresentation}. 
      
    \begin{figure}[h!]
        \centering
        \includegraphics[page=1]{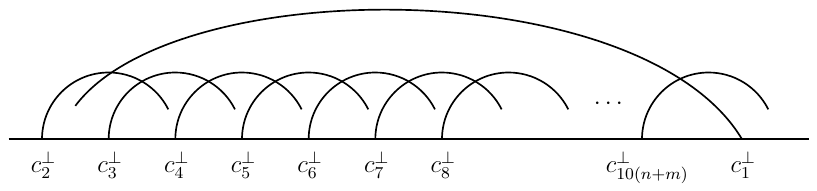}
        \caption{Outer-1-string representation of $\mathbb{C}$ in the proof of Lemma \ref{lemma:firstdirection}.}
        \label{fig:cyclerepresentation}
    \end{figure}
    
    For each variable $v_i$ we will represent $H(v_i)$ in such a way that the anchors of all curves representing the vertices of $H(v_i)$ are placed between $c_{10i}^{\bot}$ (the vertex of $\mathbb{C}$ assigned to $v_i$) and $c_{10i+1}^{\bot}$.
    Next, if a variable $v_i$ is assigned \textsc{True} by the truth assignment, we place $R_{v_i}^{\bot}$ to the left of  $B_{v_i}^{\bot}$ and if it was assigned \textsc{False}, we place $R_{v_i}^{\bot}$ to the right of  $B_{v_i}^{\bot}$. 
    If $v_i$ was assigned \textsc{True}, we place the anchors $r_1^{\bot},b_1^{\bot},r_2^{\bot},b_2^{\bot},\dots, r_l^{\bot},b_l^{\bot}$ in this order between $R_{v_i}^{\bot}$ and $B_{v_i}^{\bot}$, if $v_i$ was assigned \textsc{False} we place the anchors by flipping the position of $r_i^{\bot}$ and $b_i^{\bot}$ for every $i\in \{1,2,,\dots, l\}$. Finally, we represent $H(v_i)$ as in Figure \ref{fig:variablegadgetrepresentation}.

    \begin{figure}[ht]
        \centering
        \includegraphics[page=2,width=\linewidth]{Figures/reduction.pdf}
        \caption{Representation of the variable gadget depending in the proof of Lemma \ref{lemma:firstdirection}/}
        \label{fig:variablegadgetrepresentation}
    \end{figure}
    
    Note that the representation in Figure \ref{fig:variablegadgetrepresentation} is an outer-1-string representation of $H(v_i)$, but it does not capture the adjacencies between vertices of $H(v_i)$ and $H(v_j)$ for $i\neq j$ nor the adjacencies between the vertices of variable and clause gadgets. 

    We now describe how to connect variable gadgets to clause gadgets. Assume that $1\le a<b<c \le n$,  that $v_a,v_b,v_c \in C_x$ for some $x \in \{1,2,\dots, m\}$, and that we represented $H(v_a), H(v_b), H(v_c)$ as described above. Now, for $i\in \{a,b,c\}$, let $r_i,b_i$ be the red and blue connector of $v_i$ and~$C_x$. We then extend the curve $r_i^*$ from the current endpoint (which is the crossing with $c_{10i}^*$) by drawing a piecewise linear curve consisting of two line segments starting at the current endpoint and we stop drawing when we reach a crossing point with $c_{10(n+x)}^*$ (the curve representing the vertex of $\mathbb{C}$ assigned to $C_x$). We extend the representation of $b_i$ in a similar way. We do this in such a way that the curves $r_a,b_a,r_b,b_b,r_c,b_c$ are all pairwise disjoint for now; see Figure \ref{fig:vartoclause}. 

    \begin{figure}[ht]
        \centering
        \includegraphics[width=\linewidth,page=3]{Figures/reduction.pdf}
        \caption{Connecting variable and clause gadgets in the proof of Lemma \ref{lemma:firstdirection}.} 
        \label{fig:vartoclause}
    \end{figure}

    Note that the above construction correctly represents the edges induced by each variable gadget, and it also represents all the edges between the connectors that share neither the same variable nor the same clause. 

    It remains to represent the clause gadgets. To do this, for a clause $C_x$, we place $U_{C_x}^{\bot}$, $V_{C_x}^{\bot}$ and $W_{C_x}^{\bot}$ between $c_{10(n+x)}^{\bot}$ and $c_{10(n+x)+1}^{\bot}$. Now, the order of these anchors and specific representations will depend on the truth assignment of the variables in~$C_x$. See Figure \ref{fig:clauserep} for the representations in each of these cases. Representing all clause gadgets in this way gives us an outer-1-string representation of~$G$. \qedhere

    \begin{figure}[ht]
        \centering
        \includegraphics[page=4,width=\linewidth]{Figures/reduction.pdf}
        \caption{Outer-1-string representation of the clause gadget from the proof of Lemma \ref{lemma:firstdirection} depending on the possible truth  assignments of the variables. }
        \label{fig:clauserep}
    \end{figure}
    \end{proof}

Now, we will argue that if we are given an outer-string representation of $G$, we can read off a satisfying truth assignment for $\varphi$. To do this, we make a few observations about the properties of any such representation. 

First, we observe that in any outer-string observation of $\mathbb{C}$, we encounter the anchors of the even vertices in the expected order. We state this formally below. 

\begin{lemma} \label{lemma:cyclerepresentation}
    Let $k\ge 6$ be even, and let $C_k$ be the cycle with vertices denoted $c_1,\dotsc,c_k$ in the order of appearance on the cycle. Then in any outer-string representation of $C_k$, the anchors of the even-numbered vertices $c_2^\bot, c_4^\bot,\dotsc, c_k^\bot$ appear along the boundary of $D$ either clockwise or counter-clockwise in the order given above.
\end{lemma}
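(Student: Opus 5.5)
Given an outer-string representation of $C_k$, read off the cyclic order $\pi$ of all anchors. This turns the representation into a solvable constrained outer-string instance, so by Lemma~\ref{lem:crossingpaths} no two independent vertex-disjoint paths of $C_k$ can have alternating end-vertices in $\pi$. The plan is to show that this single restriction already forces the even-numbered anchors to appear in cyclic order, up to reversal.

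\textbf{Reduction to four-tuples.} A cyclic sequence is in cyclic order (up to reversal) if and only if every four of its elements are. So it suffices to take four even indices $i<j<k'<l$ and show that $c_i^\bot,c_j^\bot,c_{k'}^\bot,c_l^\bot$ appear in $\pi$ in cyclic order $i,j,k',l$ or its reverse. Equivalently, the pairs $\{c_i,c_{k'}\}$ and $\{c_j,c_l\}$ must alternate. Indeed, the three cyclic arrangements of four points are determined by which two of the three pairings alternate, and ``cyclic order $i,j,k',l$ or reverse'' is exactly the arrangement in which the pairs $\{i,j\},\{k',l\}$ do not alternate, the pairs $\{j,k'\},\{l,i\}$ do not alternate, and $\{i,k'\},\{j,l\}$ do alternate.

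\textbf{Excluding the two bad pairings.} Suppose $\{c_i,c_j\}$ and $\{c_{k'},c_l\}$ alternate. Take $P$ to be the arc of the cycle from $c_i$ to $c_j$ through $c_{i+1},\dots,c_{j-1}$, and $Q$ the arc from $c_{k'}$ to $c_l$ through $c_{k'+1},\dots,c_{l-1}$. These paths are vertex-disjoint. They are independent unless an edge joins them, which can only happen when $j+1=k'$ or $l+1=i$ (indices mod $k$). Because $i,j,k',l$ are all even, consecutive ones differ by at least $2$, so each gap contains at least one vertex, namely an odd one. Hence $P$ and $Q$ are independent with alternating endpoints, contradicting Lemma~\ref{lem:crossingpaths}. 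The pairing $\{c_j,c_{k'}\},\{c_l,c_i\}$ is excluded by the same argument. The cyclic rotation of the four even indices keeps both gaps nonempty.

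\textbf{The remaining pairing and the global step.} The only arrangement left for the four anchors is the one in which $\{c_i,c_{k'}\}$ and $\{c_j,c_l\}$ alternate, which is exactly the cyclic order $i,j,k',l$ or its reverse. This holds for every four even indices. Since $k\ge 6$ gives at least three even vertices, and for $k=6$ three points are trivially in cyclic order, a standard argument finishes the proof: a cyclic arrangement in which every quadruple is consistent with a fixed cyclic order, or its reverse, is globally that order or its reverse. To see this, the orientation of each triple is determined, and quadruple consistency forces all triple orientations to agree.

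\textbf{Main obstacle.} I expect the only delicate point to be the independence check. Only the even vertices are selected, so that the two arcs never touch via a cycle edge. This is precisely why the statement restricts to even indices.
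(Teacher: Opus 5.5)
Your proof is correct, but it is organized differently from the paper's.

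The paper argues extremally. After mirroring so that $c_2^\bot,c_4^\bot,c_6^\bot$ are counter-clockwise, it takes the largest even $j$ for which $c_2^\bot,\dots,c_j^\bot$ is counter-clockwise. It then shows that $c_{j+2}^\bot$ cannot lie in any arc $A_i$ between consecutive anchors of this prefix. Only two path pairs are needed for this: $c_2c_3c_4$ versus $c_jc_{j+1}c_{j+2}$ when $c_{j+2}^\bot\in A_2$, and $c_{j+2}\dots c_kc_1c_2$ versus $c_4\dots c_j$ otherwise.

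You instead prove a uniform local statement. For every four even indices $i<j<k'<l$, neither $\{c_i,c_j\},\{c_{k'},c_l\}$ nor $\{c_j,c_{k'}\},\{c_l,c_i\}$ can alternate, because the corresponding arcs of the cycle are separated on both sides by gaps each containing an odd vertex. You then glue the quadruples together through triple orientations.

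What each route buys:
\begin{itemize}
\item Your route is symmetric, needs no case split on where a new anchor lands, and makes transparent why only even-numbered vertices are used.
\item The price is the combinatorial gluing step, which you only sketch. It needs two observations: two triples sharing two indices lie in a common quadruple and so have equal orientation, and any two triples are linked by a chain of such triples.
\item The paper's extremal argument builds the global orientation directly and avoids this gluing step.
\end{itemize}

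One small slip: in any arrangement of four points on a circle, exactly one of the three pairings alternates, not two. Your next sentence states the correct characterization, so nothing downstream is affected.
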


\begin{proof}
 Fix an outer-string representation of~$C_k$. We may assume that $c_2^\bot$, $c_4^\bot$ and $c_6^\bot$ form a counter-clockwise triple along the boundary of $D$, otherwise we replace the representation with its mirror image. Let $j$ be the largest even index such that $c_2^\bot, c_4^\bot,\dotsc,c_j^\bot$ form a counter-clockwise sequence. If $j=k$ we are done, so suppose for contradiction that $j<k$. For every even $i\in\{2,4,6,\dotsc,j-2\}$, let $A_i$ denote the counter-clockwise arc of the boundary of $D$ starting in $c_i^\bot$ and ending in $c_{i+2}^\bot$. By the choice of $j$, we know that $c_{j+2}^\bot$ is inside $A_i$ for some $i\in\{2,4,6,\dotsc,j-2\}$.

If $c_{j+2}^\bot$ is in $A_2$, then the paths $P=c_2 c_3 c_4$ and $Q=c_j c_{j+1} c_{j+2}$ are in contradiction with Lemma~\ref{lem:crossingpaths}. On the other hand, if $c_{j+2}^\bot$ is in 
$A_4\cup A_6\cup\dotsb\cup A_{j-2}$, we consider the paths $P=c_{j+2} c_{j+3} \dotsb c_{k-1} c_k c_1 c_2$ and $Q=c_4 c_5 c_6 \dotsb c_j$, again contradicting Lemma~\ref{lem:crossingpaths}.
\end{proof}
Now, let us fix an outer-string representation $G^*$ of $G$. In view of Lemma \ref{lemma:cyclerepresentation}, we can assume that the anchors $c_2^\bot,c_4^{\bot},\dots, c_{10(n+m)}^{\bot}$ are seen in this order when traversing the boundary of $D$ counter-clockwise; if they were ordered clockwise, we would replace the representation by its mirror image. 
For an even $i\in\{2,4,\dotsc, 10(m+n)\}$, let $A_i$ denote the counter-clockwise arc of the boundary of $D$ starting in $c_i^\bot$ and ending in $c_{i+2}^\bot$, with $A_{10(m+n)}$ starting in $c_{10(m+n)}^\bot$ and ending in $c_2^\bot$. 

Recall that in the construction of $G$, the variable $v_i$ has been associated to the vertex $c_{10i}$ and the clause $C_j$ to the vertex $c_{10(n+j)}$. We now define the \emph{region of $v_i$} as $A_{10i-2}\cup A_{10i}$, and the \emph{region of $C_j$} as $A_{10(n+j)-2}\cup A_{10(n+j)}$. Notice that the regions of all the variables and clauses are pairwise disjoint.

\begin{lemma} \label{lem:anchorsofmainvertices}
For each variable $v$, the anchors of its two assigners $R_v$ and $B_v$ are in the region of $v$, while for every clause $C$, the anchors of its three verifiers $U_C$, $V_C$ and $W_C$ are in the region of $C$.
\end{lemma}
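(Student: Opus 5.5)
The plan is to apply Lemma~\ref{lem:crossingpaths} to a short path through the assigner or verifier and a long arc of the cycle $\mathbb{C}$. Throughout, the anchors of the even cycle vertices are in counter-clockwise order, as fixed after Lemma~\ref{lemma:cyclerepresentation}.

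\textbf{Assigners.} Let $v=v_i$; we treat $R_v$, and $B_v$ is symmetric. Suppose for contradiction that $R_v^\bot$ is not in the region $A_{10i-2}\cup A_{10i}$ of $v$. Then $R_v^\bot$ lies in some arc $A_j$ with $j$ even and $j\notin\{10i-2,10i\}$.

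Since $v$ occurs in at least two clauses, it has two red connectors $r=r(v,C_x)$ and $r'=r(v,C_y)$ with $x\ne y$. In $G$, each red connector of $v$ has exactly two neighbours on $\mathbb{C}$, namely $c_{10i}$ and its clause vertex; for $r$ this is $c_s$ with $s=10(n+x)$, and for $r'$ it is $c_{s'}$ with $s'=10(n+y)$. The vertex $R_v$ has no neighbour on $\mathbb{C}$. So $P=R_v\,r\,c_{10i}$ is a path whose end-anchors are $R_v^\bot\in A_j$ and $c_{10i}^\bot$.

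Two paths along $\mathbb{C}$ are candidates for $Q$. The first, $Q_1$, runs counter-clockwise in index order from $c_{10i+2}$ to $c_{j+2}$. The second, $Q_2$, runs from $c_j$ to $c_{10i-2}$. Neither contains $c_{10i-1}$, $c_{10i}$ or $c_{10i+1}$, so neither is adjacent to $c_{10i}$ or $R_v$. In both cases the end-anchors of $Q_t$ separate $R_v^\bot$ from $c_{10i}^\bot$ on the boundary of $D$. This holds because $c_{10i+2}^\bot,\dots,c_j^\bot,R_v^\bot,c_{j+2}^\bot,\dots,c_{10i-2}^\bot$ appear in this counter-clockwise order, and $c_{10i}^\bot$ lies between $c_{10i-2}^\bot$ and $c_{10i+2}^\bot$.

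It remains to make $Q_t$ independent of $r$, which holds exactly when $c_s\notin Q_t$. The two paths $Q_1$ and $Q_2$ share only $c_j,c_{j+1},c_{j+2}$. Hence if $c_s$ lies in both, then $s\in\{j,j+2\}$, since $s$ is even. In that case I switch to $r'$ and the path $R_v\,r'\,c_{10i}$. This fails only if also $s'\in\{j,j+2\}$ with $s'\neq s$, which would force both $j$ and $j+2$ to be multiples of $10$, an impossibility. Thus some choice of connector and of $Q_t$ gives two independent paths with alternating end-vertices, contradicting Lemma~\ref{lem:crossingpaths}.

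\textbf{Verifiers.} The argument is identical with the roles of variables and clauses exchanged. Let $C=C_x$, consider a verifier such as $U_C$, and suppose its anchor lies in some arc $A_j$ outside the region of $C$. The vertex $U_C$ has no neighbours on $\mathbb{C}$, and it has two connector neighbours, $r(v_{a_1},C)$ and $b(v_{a_2},C)$. Each of these is adjacent on $\mathbb{C}$ only to $c_{10(n+x)}$ and to its variable vertex, $c_{10a_1}$ and $c_{10a_2}$ respectively, and these two variable vertices are distinct. Using the path $U_C\,r(v_{a_1},C)\,c_{10(n+x)}$ or $U_C\,b(v_{a_2},C)\,c_{10(n+x)}$ against the cycle paths from $c_{10(n+x)+2}$ to $c_{j+2}$ or from $c_j$ to $c_{10(n+x)-2}$, the same multiple-of-$10$ argument produces a contradiction. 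The verifiers $V_C$ and $W_C$ are handled the same way.

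\textbf{Main obstacle.} The hardest step is ensuring that the long cycle path $Q$ avoids the second cycle-neighbour of the connector. This is exactly why the construction assumes that each variable lies in at least two clauses and spaces the special cycle vertices $10$ apart.
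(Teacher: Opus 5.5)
Your proof is correct, but it pairs the paths differently from the paper. The paper plays the short cycle path $c_k c_{k+1} c_{k+2}$, which bounds the arc $A_k$ containing the misplaced anchor, against $R_v\, r(v,C_j)\, c_{10(n+j)}$, a path running to the \emph{clause} vertex. The only choice it needs is a clause $C_j \ni v$ whose region does not contain $A_k$. Such a clause exists because $v$ lies in at least two clauses and clause regions are disjoint.

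You instead run the connector path to the variable's own vertex $c_{10i}$ and use one of the two long complementary cycle paths $Q_1, Q_2$. Independence then requires avoiding the connector's clause vertex $c_s$. This costs you the extra case analysis: $c_s$ lies in both $Q_t$ only if $s\in\{j,j+2\}$, and two distinct multiples of $10$ cannot both lie in $\{j,j+2\}$.

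Both arguments use the two-clauses assumption the same way: to escape the single bad situation where the misplaced anchor sits next to the connector's far cycle-neighbour. The paper's local three-vertex path makes the independence and alternation checks immediate. Your route additionally has to verify that the long paths avoid $c_{10i-1},c_{10i},c_{10i+1}$ and to track where they overlap. Your verifier case transfers correctly, since the two connector neighbours of each verifier lead to distinct variable vertices.
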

\begin{proof}
For contradiction, assume first that an assigner of $v$, say $R_v$, is anchored inside an arc $A_k$ which is not part of the region of $v$. Choose a clause $C_j$ containing the variable $v$ and such that $A_k$ is not part of the region of $C_j$. This choice is possible, because by our assumption on $\varphi$, there are at least two clauses containing~$v$. Now consider the path $P=c_k, c_{k+1}, c_{k+2}$ and the path $Q=R_v, r(v,C_j), c_{10(n+j)}$ and observe that these paths contradict Lemma~\ref{lem:crossingpaths}.

A similar argument shows that each of the three verifiers $U_C$, $V_C$ and $W_C$ are anchored in the region of~$C$.
\end{proof}

Finally, we describe how to read off the truth assignment from $G^*$.  To a variable $v_a \in \varphi$, we assign \textsc{True}, if the anchor of $R_{v_a}$ appears before the anchor of $B_{v_a}$ when we pass through the region of $v_a$ in counter-clockwise direction, otherwise $v_a$ is assigned \textsc{False}.  The next lemma tells us that this defines a satisfying assignment for $\varphi$. 

\begin{lemma}
Let $C$ be a clause in $\varphi$, and let $v_a,v_b,v_c$ be the variables in $C$, with $a<b<c$. Then the truth assignment described above does not assign the same value to all of $v_a,v_b,v_c$.     
\end{lemma}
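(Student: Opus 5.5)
The plan is to use Lemma~\ref{lem:crossingpaths} once for each of the three variables of $C$. Each application turns the relative order of the two assigners of a variable into a relative order of two verifiers of $C$. The three resulting orders will form a cycle, which is impossible inside a single arc.

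Fix $C$ and write $r_i=r(v_{a_i},C)$, $b_i=b(v_{a_i},C)$ for $i\in\{1,2,3\}$, where $(a_1,a_2,a_3)=(a,b,c)$. Recall that $U_C\sim r_1,b_2$, that $V_C\sim r_2,b_3$, and that $W_C\sim r_3,b_1$.

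For the variable $v_a$, I would take the two paths
\[
P=R_{v_a}\,r_1\,U_C, \qquad Q=B_{v_a}\,b_1\,W_C .
\]
Checking independence is the main bookkeeping step:
\begin{itemize}
\item $r_1b_1$ is not an edge, because connectors with the same index are non-adjacent.
\item $R_{v_a}$ is adjacent only to red connectors of $v_a$, so it misses $B_{v_a}$, $b_1$ and $W_C$.
\item $B_{v_a}$ misses $r_1$ and $U_C$.
\item $U_C$ is adjacent only to $r_1$ and $b_2$.
\item $W_C$ is adjacent only to $r_3$ and $b_1$.
\item The verifiers are pairwise non-adjacent, and the assigners are non-adjacent to each other.
\end{itemize}
Hence $P$ and $Q$ are independent, and by Lemma~\ref{lem:crossingpaths} their end-vertices $\{R_{v_a},U_C\}$ and $\{B_{v_a},W_C\}$ do not alternate.

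By Lemma~\ref{lem:anchorsofmainvertices}, $R_{v_a}^\bot$ and $B_{v_a}^\bot$ lie in the region of $v_a$, and $U_C^\bot$ and $W_C^\bot$ lie in the region of $C$. These regions are disjoint arcs of the boundary, so each carries a linear counter-clockwise order. Non-alternation then gives the following equivalence: $R_{v_a}$ precedes $B_{v_a}$ if and only if $W_C$ precedes $U_C$ in the region of $C$. Indeed, the cyclic order $R,B,U,W$ would alternate.

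The same argument applies to the other two variables:
\begin{itemize}
\item For $v_b$, use $R_{v_b}\,r_2\,V_C$ and $B_{v_b}\,b_2\,U_C$. This shows that $v_b$ is \textsc{True} if and only if $U_C$ precedes $V_C$.
\item For $v_c$, use $R_{v_c}\,r_3\,W_C$ and $B_{v_c}\,b_3\,V_C$. This shows that $v_c$ is \textsc{True} if and only if $V_C$ precedes $W_C$.
\end{itemize}
Independence is checked exactly as above, using the pattern of verifier adjacencies.

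Suppose now that all three variables are \textsc{True}. Then, in the linear order of the region of $C$, we get $W_C<U_C<V_C<W_C$, which is a contradiction. If all three are \textsc{False}, every inequality reverses and we get a cycle again.

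The only real care needed is the independence verification. The key point there is that each verifier touches exactly one red and one blue connector, and these come from different variables. The argument also relies on the variable regions and clause regions being disjoint arcs, so that "before" is well defined and transfers as claimed.
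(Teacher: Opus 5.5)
Your proof is correct and follows the paper's argument exactly: the same three pairs of paths (for instance $R_{v_a}\,r(v_a,C)\,U_C$ and $B_{v_a}\,b(v_a,C)\,W_C$) are fed into Lemma~\ref{lem:crossingpaths}, giving the impossible cyclic chain $W_C<U_C<V_C<W_C$ in the region of $C$. Your explicit independence check, and your derivation of the ordering from the disjointness of the variable and clause regions, spell out details that the paper leaves implicit.
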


\begin{proof}
    Assume, for contradiction, that $v_a,v_b,v_c$ are all assigned \textsc{True} (if they are assigned \textsc{False}, a similar argument applies). This means that for each $i\in \{a,b,c\}$, $R_{v_i}^\bot$ precedes $B_{v_i}^\bot$ in the counter-clockwise order in the region of $v_i$. 
    We now argue that it is impossible to represent $U_{C}$, $V_{C}$ and $W_{C}$ in~$G^*$. 
Consider the two paths $P=R_{v_a}, r(v_a,C), U_C$ and $Q=B_{v_a},b(v_a,C),W_C$. The two paths satisfy the assumptions of Lemma~\ref{lem:crossingpaths}. The lemma then implies that the anchor of $W_C$ must precede the anchor of $U_C$ when the region of $C$ is passed counter-clockwise. Applying the same argument to $v_b$, we obtain that $U_C$ must precede $V_C$, and applying the argument to $v_c$, we conclude that $V_C$ must precede $W_C$, which is impossible.
\end{proof}

This concludes the proof of Theorem \ref{thm:nphardness}.

\begin{corollary}
For every fixed $k\ge 1$, recognizing outer-$k$-string graphs is NP-complete.     
\end{corollary}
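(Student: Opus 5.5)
The corollary splits into NP-hardness and NP-membership, and I will treat them in that order.

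For NP-hardness, fix $k\ge 1$ and let $\mathcal{G}$ be the class of outer-$k$-string graphs. I will check that $\mathcal{G}$ satisfies the hypotheses of Theorem~\ref{thm:nphardness}. Any outer-1-string representation is also an outer-$k$-string representation, since each pair of curves shares at most $1\le k$ crossing points. Hence every outer-1-string graph lies in $\mathcal{G}$. Conversely, every outer-$k$-string representation is an outer-string representation, so $\mathcal{G}$ is a subclass of outer-string graphs. Theorem~\ref{thm:nphardness} then gives NP-hardness of recognizing $\mathcal{G}$ directly.

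For membership in NP, the idea is that the bounded number of crossings per pair makes a representation describable by a polynomial-size combinatorial certificate. Given an outer-$k$-string representation of $G$ with $n$ vertices, I take the plane graph whose vertices are the anchors, the tips, all crossing points, and the boundary points of $D$ between consecutive anchors. Its edges are the curve pieces and the boundary arcs. It has at most $k\binom{n}{2}+O(n)$ vertices, hence polynomial size.

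Touching points need some care, since they are counted twice. They are still vertices of this graph, and I record them as touchings rather than crossings. Beyond that, I will ignore degenerate configurations by a standard perturbation argument, which keeps the number of common points of each pair at most $k$.

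The certificate will be this plane graph given by its rotation system, with each edge labelled by the vertex of $G$ whose curve it belongs to. To verify the certificate, the checker does the following:
\begin{itemize}
\item It checks that the rotation system is planar, via the Euler characteristic, with all anchors on the outer face.
\item It checks that each vertex's labelled edges form a simple path from an anchor to a tip.
\item It checks that two curves share a point exactly when the corresponding vertices are adjacent, and that no pair shares more than $k$ points, with touchings counted twice.
\end{itemize}
All of these checks run in polynomial time. Redrawing a plane embedding from a valid certificate recovers an actual outer-$k$-string representation.

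The only mildly delicate step is making this certificate rigorous, in particular handling touchings and making sure the embedding is realizable. The paper already states that NP-membership is straightforward, so I expect the NP-hardness half, which is immediate from Theorem~\ref{thm:nphardness}, to carry the substance, and the membership half to be routine.
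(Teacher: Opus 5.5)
Your proposal is correct and follows the paper's approach. NP-hardness comes from applying Theorem~\ref{thm:nphardness} to the class of outer-$k$-string graphs, which contains all outer-1-string graphs and is contained in the outer-string graphs; NP-membership comes from the polynomial-size combinatorial certificate that the bound of $k$ crossings per pair allows, which the paper only calls straightforward and you spell out in more detail.
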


\section{Conclusion and Open Problems}\label{sec:conclusion}

In this paper we considered outer-string representations of graphs with additional restrictions. We characterized  ordered $\{C_3,C_5\}$-free graphs which admit a constrained outer-string representation. Additionally, we answered a question posed in \cite{biedlCKR:gd24,biedlCKR:DAM26} by proving that the recognition of outer-1-string graphs is NP-hard. There are several directions in which our results can be extended; we mention two of them here. 

Firstly, an easy consequence of Theorem \ref{thm:C3C5-free} is that a bipartite  graph (or, more generally, a $\{C_3,C_5\}$-free graph) $G$ admits an outer-string representation if and only if it is possible to order the vertices of $G$ so that the resulting ordering introduces no alternating-edge obstructions. This means that if we are given an ordering on $G$, we can decide in polynomial time whether $G$ admits a constrained outer-string representation. It would be interesting to know if it is hard to decide whether $G$ admits an ordering with no alternating-edge obstruction. 

\begin{problem}\label{pro-bipartite}
Let $G$ be a bipartite graph. Can we decide in polynomial time whether $G$ has a cyclic ordering of vertices that has no alternating-edge obstruction?
\end{problem}
In view of Theorem~\ref{thm:C3C5-free}, a positive answer to Problem~\ref{pro-bipartite} is equivalent to a polynomial algorithm for testing whether a bipartite graph is an outer-string graph. 

For a graph $G$ that is not necessarily bipartite, recent results by Kun and Ne\v{s}et\v{r}il \cite{kun2026} imply that it is NP-hard to decide if $G$ admits an ordering with no alternating-edge obstruction. However, their reduction does not work if we assume that the graph is bipartite.  

Another intriguing problem is to extend Theorem \ref{thm:C3C5-free} to wider classes of graphs. This may require considering more general obstructions than the alternating-edge obstruction. 
As we mentioned before, co-holes are outer-string obstructions, and no other obstructions are known. We wonder if they are, in fact, the only outer-string obstructions.

\begin{problem}
Does every ordered graph $G$ with no co-hole of length greater than 3 admit a constrained outer-string representation?
\end{problem}
Answering this problem positively would imply a polynomial algorithm for the recognition of constrained outer-string graphs, because of the following proposition whose proof is included in the Appendix.

\begin{proposition}\label{prop:testing-co-holes}
It can be tested in polynomial time whether  an ordered graph contains a co-hole of length at least 4.   
\end{proposition}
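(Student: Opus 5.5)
We work in the complement $\overline{G}$ of the ordered graph $(G,\pi)$. An induced ordered copy of a co-hole of length $k\ge4$ is a sequence of vertices $u_1,\dots,u_k$ with two properties. First, it appears in this cyclic order in $\pi$. Second, it induces a cycle $u_1u_2\cdots u_ku_1$ in $\overline{G}$ whose edges are exactly the consecutive pairs, i.e.\ an induced cycle of $\overline{G}$ of length $k$. So the task is to decide whether $\overline{G}$ has an induced cycle of length at least $4$ that is \emph{monotone}, meaning its vertices are traversed in the cyclic order $\pi$. I would do this by guessing a few vertices and then running a shortest-path computation in an acyclic auxiliary digraph, using the fact that shortest monotone paths are automatically induced.

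\textbf{Step 1 (guessing).} Guess an ordered triple $(u_1,u_2,w)$ of distinct vertices that satisfies three conditions:
\begin{itemize}
\item $u_1u_2\in E(\overline{G})$ and $u_1w\in E(\overline{G})$;
\item $u_2w\notin E(\overline{G})$;
\item going around $\pi$ from $u_1$, we meet $u_2$ before $w$.
\end{itemize}
Cutting $\pi$ at $u_1$ turns it into a linear order $<$ with $u_1$ first. Let $S$ be the set of vertices $x$ with $u_2<x<w$ and $u_1x\notin E(\overline{G})$. Build the digraph $D$ on $S\cup\{u_2,w\}$ with an arc $x\to y$ whenever $x<y$ and $xy\in E(\overline{G})$. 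Since $D$ is acyclic, a shortest $u_2$--$w$ path can be found by BFS, or by DP along $<$, in polynomial time. There are $O(n^3)$ guesses, so the whole procedure is polynomial.

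\textbf{Step 2 (soundness).} Suppose $D$ has a $u_2$--$w$ path, and let $P=u_2=x_1,x_2,\dots,x_s=w$ be a shortest one. We claim $u_1,P$ is a monotone induced cycle of length at least $4$.
\begin{itemize}
\item Since $u_2w\notin E(\overline{G})$, we have $s\ge3$, so the cycle has length at least $4$.
\item There is no chord $x_ix_j$ with $j\ge i+2$. Such an edge is an arc $x_i\to x_j$ of $D$, because $x_i<x_j$, and it would give a shorter monotone path. This is the key observation: a chord of a monotone path always yields a shortcut that is still monotone.
\item $u_1$ has no neighbour in $\overline{G}$ on $P$ other than $u_2$ and $w$, because interior vertices lie in $S$.
\item The vertices appear in the order $u_1<x_1<\dots<x_s$, so the cycle is monotone with respect to $\pi$.
\end{itemize}
Hence $G$ contains the co-hole of length $s+1\ge4$ in the natural order.

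\textbf{Step 3 (completeness).} Conversely, let $u_1,\dots,u_k$ be a co-hole in $G$ with $k\ge4$. Take the guess $(u_1,u_2,u_k)$; it is admissible because $u_2u_k$ is a chord position and therefore not an edge of $\overline{G}$ when $k\ge4$. Then $u_2,u_3,\dots,u_k$ is a path in $D$. Indeed, each interior $u_i$ lies between $u_2$ and $u_k$ in the cut order and is non-adjacent to $u_1$ in $\overline{G}$, and consecutive vertices are adjacent in $\overline{G}$ and increasing. So the algorithm answers yes.

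The only delicate point, and the one I expect to be the main obstacle, is preventing the shortest-path shortcutting from collapsing the cycle to a triangle of $\overline{G}$, which is not an obstruction. Guessing the non-adjacent pair $u_2,w$ and removing all $\overline{G}$-neighbours of $u_1$ from the interior is exactly what blocks this: shortcuts cannot reach $u_1$, and they cannot connect $u_2$ to $w$ directly. I would double-check the cyclic-versus-linear bookkeeping once more, but I do not expect further difficulty there.
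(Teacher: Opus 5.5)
Your proposal is correct and follows essentially the same route as the paper's appendix proof. In both, you fix the first vertex $u_1$ (the paper's $v_1$) and guess its two cycle-neighbours $u_2,w$, which must be non-adjacent in $\overline{G}$. You then find a shortest increasing path in $\overline{G}$ through vertices adjacent to $u_1$ in $G$, and use that a shortest monotone path is induced, giving the same $O(n^5)$ bound.

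Your explicit restriction of $S$ to vertices between $u_2$ and $w$ is redundant, since the orientation already forces it, but it is harmless.
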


Unfortunately, it seems that extending Theorem \ref{thm:C3C5-free} to a broader class of graphs would require a significantly different approach. In Figure \ref{fig:unextendable}, we show an example of an ordered graph $G$ that admits a constrained outer-string representation. We also show that for 
the vertex $t\in V(G)$, there is an outer-string representation of $G - t$ that cannot be extended to a representation of $G$ using the approach from the proof of Theorem~\ref{thm:C3C5-free}. 

\begin{figure}[ht]
    \centering
    \includegraphics[width=0.9\linewidth]{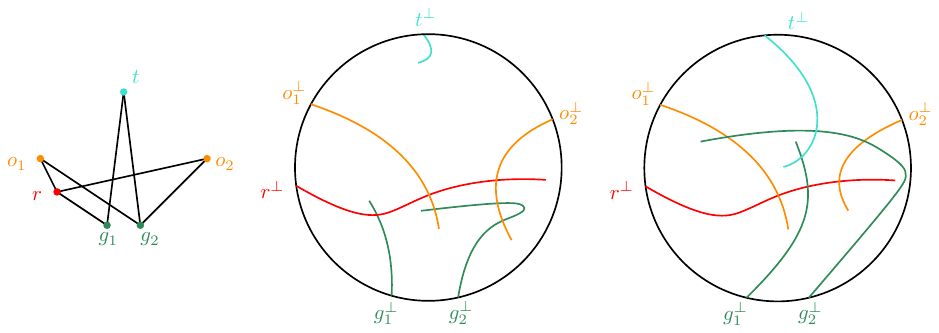}
    \caption{Left: an ordered graph $G$ ; center: a representation of $G-t$ which cannot be extended to a representation of $G$ by the method of trajectories and tunnels used to prove Theorem~\ref{thm:C3C5-free}; right: a constrained outer-string representation of~$G$.}
    \label{fig:unextendable}
\end{figure}

\bibliography{references.bib}

@inproceedings{biedlCKR:gd24,
  author       = {Therese Biedl and
                  Sabine Cornelsen and
                  Jan Kratochv{\'{\i}}l and
                  Ignaz Rutter},
  editor       = {Stefan Felsner and
                  Karsten Klein},
  title        = {Constrained Outer-String Representations},
  booktitle    = {32nd International Symposium on Graph Drawing and Network Visualization (GD'24)},
  series       = {LIPIcs},
  volume       = {320},
  pages        = {10:1--10:18},
  publisher    = {Schloss Dagstuhl - Leibniz-Zentrum f{\"{u}}r Informatik},
  year         = {2024},
  OPTurl          = {https://doi.org/10.4230/LIPIcs.GD.2024.10},
  doi          = {10.4230/LIPICS.GD.2024.10}
}

@article{biedlCKR:DAM26,
  author       = {Therese Biedl and
                  Sabine Cornelsen and
                  Jan Kratochv{\'{\i}}l and
                  Ignaz Rutter},
    title        = {Constrained Outer-String Representations},
  journal    = {Discrete Applied Mathematics},
    volume       = {390},
  pages        = {114--134},
    year         = {2026},
    doi          = {10.1016/j.dam.2026.04.018}
}

@article{Kratochvil91a,
  author       = {Jan Kratochv{\'{\i}}l},
  title        = {String graphs. {II.} recognizing string graphs is NP-hard},
  journal      = {J. Comb. Theory {B}},
  volume       = {52},
  number       = {1},
  pages        = {67--78},
  year         = {1991},
  url          = {https://doi.org/10.1016/0095-8956(91)90091-W},
  doi          = {10.1016/0095-8956(91)90091-W},
  bibsource    = {dblp computer science bibliography, https://dblp.org}
}

@book{B-L-Spinrad,
  author       = {Brandst{\"a}dt, Andreas and Le, Van Bang and Spinrad, Jeremy P.},
  title        = {{Graph Classes: A Survey}},
    publisher    = {Society for Industrial
and Applied Mathematics},
  year         = {1999},
doi ={10.1137/1.9780898719796}
}

@article{MiddendorfP93cylinder,
  author       = {Matthias Middendorf and
                  Frank Pfeiffer},
  title        = {Weakly transitive orientations, Hasse diagrams and string graphs},
  journal      = {Discret. Math.},
  volume       = {111},
  number       = {1-3},
  pages        = {393--400},
  year         = {1993},
  url          = {https://doi.org/10.1016/0012-365X(93)90176-T},
  doi          = {10.1016/0012-365X(93)90176-T},
  bibsource    = {dblp computer science bibliography, https://dblp.org}
}

@article{KratochvilM91exp,
  author       = {Jan Kratochv{\'{\i}}l and Ji{\v{r}{\'{\i}} Matou{\v s}ek}},
  title= {String graphs requiring exponential representations},
  journal      = {J. Comb. Theory {B}},
  volume       = {53},
  number       = {1},
  pages        = {1--4},
  year         = {1991},
  url          = {https://doi.org/10.1016/0095-8956(91)90050-T},
  doi          = {10.1016/0095-8956(91)90050-T},
  bibsource    = {dblp computer science bibliography, https://dblp.org}
}

@book{McKee,
    author = { McKee, Terry A. and McMorris,  F. R. },
    title = {{Topics in Intersection Graph Theory}},
    publisher = {Society for Industrial
and Applied Mathematics},
    year = {1999}
}

@article{ehrlichET76,
title = {Intersection graphs of curves in the plane},
journal = {Journal of Combinatorial Theory, Series B},
volume = {21},
number = {1},
pages = {8-20},
year = {1976},
doi = {https://doi.org/10.1016/0095-8956(76)90022-8},
OPTurl = {https://www.sciencedirect.com/science/article/pii/0095895676900228},
author = {Gideon Ehrlich and Shimon Even and Robert Endre Tarjan},
}

@book{golumbic:perfect,
    author = {Martin Charles Golumbic},
    title = {Algorithmic Graph Theory and Perfect Graphs},
    publisher = {Elsevier},
    year = 1980
}

@ARTICLE{sinden66,
    AUTHOR = "F. W. Sinden",
    TITLE = "Topology of thin film {RC} circuits",
    JOURNAL = "Bell System Tech. J.",
    VOLUME = {45},
    PAGES = {1639-1662},
    YEAR = {1966},
    doi = {10.1002/j.1538-7305.1966.tb01713.x}
    }

@inproceedings{BiedlBD18,
  author       = {Therese Biedl and
                  Ahmad Biniaz and
                  Martin Derka},
  title        = {On the Size of Outer-String Representations},
  booktitle    = {16th Scandinavian Symposium and Workshops on Algorithm Theory (SWAT'18)},
  editor = {David Eppstein},
  series       = {LIPIcs},
  volume       = {101},
  pages        = {10:1--10:14},
  publisher    = {Schloss Dagstuhl - Leibniz-Zentrum f{\"{u}}r Informatik},
  year         = {2018},
  doi          = {10.4230/LIPICS.SWAT.2018.10},
}

@inproceedings{BiedlD17,
  author       = {Therese Biedl and
                  Martin Derka},
  title        = {Order-Preserving 1-String Representations of Planar Graphs},
  booktitle    = {International Conference on Current Trends in Theory and Practice of Computer Science (SOFSEM'17)},
  editor       = {Bernhard Steffen and
                  Christel Baier and
                  Mark van den Brand and
                  Johann Eder and
                  Mike Hinchey and
                  Tiziana Margaria},
  series       = {Lecture Notes in Computer Science},
  volume       = {10139},
  pages        = {283--294},
  publisher    = {Springer},
  year         = {2017},
  doi          = {10.1007/978-3-319-51963-0_22},
}

@article{OuterstringChiBounded,
  author       = {Alexandre Rok and
                  Bartosz Walczak},
  title        = {Outerstring Graphs are {\(\chi\)}-Bounded},
  journal      = {{SIAM} J. Discret. Math.},
  volume       = {33},
  number       = {4},
  pages        = {2181--2199},
  year         = {2019},
  url          = {https://doi.org/10.1137/17M1157374},
  doi          = {10.1137/17M1157374},
  bibsource    = {dblp computer science bibliography, https://dblp.org}
}

@inproceedings{kratochvil:82,
 author={Jan Kratochv{\'\i}l},
 title={String graphs},
 booktitle={Graphs and Other Combinatorial Topics, Proceedings Third Czechoslovak Symposium on Graph Theory, Prague},
 year = 1982, 
 publisher = {Teubner, Berlin},
 series = {Teubner Texte zur Mathematik},
 volume = {59},
 pages={168--172}
}

@article{jcss-SchaeferSS03,
  author       = {Marcus Schaefer and
                  Eric Sedgwick and
                  Daniel {\v S}tefankovi{\v c}},
  title        = {Recognizing string graphs in {NP}},
  journal      = {J. Comput. Syst. Sci.},
  volume       = {67},
  number       = {2},
  pages        = {365--380},
  year         = {2003},
  url          = {https://doi.org/10.1016/S0022-0000(03)00045-X},
  doi          = {10.1016/S0022-0000(03)00045-X},
  bibsource    = {dblp computer science bibliography, https://dblp.org}
}

@inproceedings{Schaefer-STOC78,
  author       = {Thomas J. Schaefer},
  editor       = {Richard J. Lipton and
                  Walter A. Burkhard and
                  Walter J. Savitch and
                  Emily P. Friedman and
                  Alfred V. Aho},
  title        = {The Complexity of Satisfiability Problems},
  booktitle    = {Proceedings of the 10th Annual {ACM} Symposium on Theory of Computing,
                  May 1-3, 1978, San Diego, California, {USA}},
  pages        = {216--226},
  publisher    = {{ACM}},
  year         = {1978},
    doi          = {10.1145/800133.804350}
  }

@inproceedings{kun2026,
author = {G{\'{a}}bor Kun and Jaroslav Ne{\v{s}}et{\v{r}}il},
title = {Dichotomy for orderings?},
booktitle = {Proceedings of the 2026 Annual ACM-SIAM Symposium on Discrete Algorithms (SODA)},
pages = {4175--4187},
year = {2026},
doi = {10.1137/1.9781611978971.153},
publisher    = {{ACM}},
URL_ = {https://epubs.siam.org/doi/abs/10.1137/1.9781611978971.153},
eprint_ = {https://epubs.siam.org/doi/pdf/10.1137/1.9781611978971.153}
}

\newpage
\appendix

\section{Proof of Proposition~\ref{prop:testing-co-holes}}\label{sec:Appendix}

We describe an algorithm for testing if a given ordered graph contains a co-hole of length at least 4. Let $V(G)=\{v_1,v_2,\ldots,v_n\}$ and let this be the  cyclic ordering of $G$ given as part of the input. We will first test whether $G$ contains a co-hole which contains the vertex $v_1$. Such a co-hole would involve vertices $v_x,v_y$ such that $1<x<y$, $v_1v_x,v_1v_y\notin E(G)$ and $v_xv_y\in E(G)$. For every pair of such vertices, consider the complement $-G$ of $G$, orient its edges $v_iv_j$ from $v_i$ to $v_j$ when $i<j$ and ask if it contains a directed path from $v_x$ to $v_y$ which only uses vertices adjacent (in $G$) to $v_1$. This can be answered by standard search algorithms in linear time. If there is no such path, $G$ does not contain a co-hole involving the vertices $v_y,v_1,v_x$ (in this consecutive order). If there is such a path, consider a shortest one, say $P$. The shortest path is induced, and hence together with $v_1$, it forms a co-hole in $G$. Since $v_xv_y$ is an edge of $G$, the path $P$ contains another vertex $v_z$ with $x<z<y$ and he length of the co-hole is at least 4.

For fixed $v_1,v_x,v_y$, the running time is upper bounded by $O(n^2)$. Testing this for all vertices in place of $v_1$ and respective $v_x,v_y$, we get running time $O(n^5)$.
\end{document}